\NeedsTeXFormat{LaTeX2e}
\documentclass[12pt,intlimits]{article}
\usepackage[a4paper,top=2cm,bottom=2cm,left=2cm,right=2cm,bindingoffset=5mm]{geometry}
\usepackage{amsmath}
\usepackage{amsthm}
\usepackage{amssymb}
\textwidth15.5cm \textheight23cm \oddsidemargin0cm

\usepackage{graphicx}
\usepackage{epsfig}
\usepackage{hyperref}
\hypersetup{colorlinks=true,linkcolor=blue}
\setlength{\emergencystretch}{20pt} \tolerance=2000
\vfuzz2pt 
\newtheorem{theo}{Theorem}[section]

\newtheorem{prop}[theo]{Proposition}

\theoremstyle{definition}

\theoremstyle{remark}
\newtheorem{remark}[theo]{Remark}

\newcommand{\be}{\begin{eqnarray}}
\newcommand{\ee}{\end{eqnarray}}
\newcommand{\bes}{\begin{eqnarray*}}
\newcommand{\ees}{\end{eqnarray*}}
\newcommand{\bi}{\begin{itemize}}
\newcommand{\ei}{\end{itemize}}
\newcommand{\ben}{\begin{enumerate}}
\newcommand{\een}{\end{enumerate}}


\newcommand{\La}{\mathcal{L}}

\newcommand{\R}{\mathbb{R}}
\newcommand{\N}{\mathbb{N}}
\newcommand{\C}{\mathbb{C}}
\newcommand{\X}{\mathbb{X}}
\newcommand{\G}{\mathcal{G}}

\newcommand{\de}{\mathrm {d}}
\def\Ext{{\hbox{\rm Ext}}}
\def\einschr{\hbox{\kern1pt\vrule height 6pt\vrule  width6pt height 0.4pt depth0pt\kern1pt}}

\newcommand{\Lm}{\mathfrak{L}}

\DeclareMathOperator{\dive}{div}

\DeclareMathOperator{\supp}{supp}
\DeclareMathOperator{\supess}{ess \sup}

\newcommand{\Dta}{\partial_t^\alpha}

\title{\bf Non-autonomous semilinear fractional evolution equations: well-posedness and ultracontractivity results  }

\date{}

\begin{document}
\maketitle

\centerline{\scshape Simone Creo and Maria Rosaria Lancia}
\medskip
{\footnotesize

 \centerline{Dipartimento di Scienze di Base e Applicate per l'Ingegneria, Sapienza Universit\`{a} di Roma,
}
   \centerline{via Antonio Scarpa 16, 00161 Roma, Italy.}
   \centerline{E-mail: simone.creo@uniroma1.it,\quad mariarosaria.lancia@uniroma1.it}
}

\vspace{1cm}

\begin{abstract}
\noindent We consider a time-fractional semilinear parabolic abstract Cauchy problem for a time-dependent sectorial operator $A(t)$ which satisfies the Acquistapace-Terreni conditions. We first prove local existence results for the mild solution of the problem at hand. Then we prove, under suitable assumptions on the initial datum, that the solution is also global in time. This is achieved by proving ultracontractivity estimates for the fractional evolution families associated with the operator $A(t)$.
\end{abstract}

\medskip

\noindent\textbf{Keywords:} Non-autonomous evolution equations, fractional Cauchy problems, fractional derivative, global existence, ultracontractivity.\\

\noindent{\textbf{2010 Mathematics Subject Classification:} Primary: 35K90, 35R11. Secondary: 26A33, 35K58, 47D06.}

\bigskip

\section*{Introduction}
\setcounter{equation}{0}

In this paper we consider a semilinear non-autonomous fractional problem $(P)$:
\begin{equation}\notag
(P)
\begin{cases}
\Dta u(t)=A(t)u(t)+J(u(t))\quad &  t\in (0,T), \\
u(0)=u_0.
\end{cases}
\end{equation}
Here $\Dta$ is the fractional Caputo-type derivative, with $\alpha\in (0,1)$, $-A(t)\colon D(A(t))\subset X\to X$, $t\in [0,T]$, with $X$ Banach space,
are non-autonomous operators, that are the infinitesimal generators of analytic semigroups. Moreover we assume that the operators $A(t)$ satisfy the Acquistapace-Terreni conditions \cite{AT} (see also the seminal papers of Tanabe \cite{tanabe} and Sobolevskii \cite{sob}) (see Section \ref{sezAT}), where $D(A(t)):=\mathbb{X}$, (independent of $t$) for all $t\in [0,T]$ is a time invariant domain which is continuously embedded in $X$. $u_0$ is a given datum in a suitable functional space and $J$ is a Lipschitz mapping defined in $X$ satisfying suitable assumptions (see Section \ref{sec4}).
We prove well-posedness results for the mild solution and then we prove that the unique mild solution is global in time, under suitable assumptions on the initial datum.
Crucial tools for proving existence and uniqueness of the mild solution of problem $(P)$ are the proof of the ultracontractivity of the solution operators and a contraction argument in a suitable Banach space.

The literature on ATFE (autonomous time fractional equations) is wide, we refer e.g. to \cite{bazthesis,Baz2000,GLM,GKMRbook,kubyam} and to \cite{GWbook} and the references therein. Later, many results appeared on regularity and maximal regularity results for the solutions of linear and nonlinear fractional equations, see e.g. \cite{DG,DGJMAA}, \cite{clement} and \cite{Zacher} and the references listed there.

The study of NATFE (non-autonomous time fractional equations) is recent, the first results, in the linear case, are due to \cite{MonnPruss,Dong-Kim,bazthesis,El-Borai}. More recently, in \cite{hezhouFCAA}, under suitable hypotheses on the operators $A(t)$, the authors get an integral representation formula for the solution, which mimics the one in the autonomous case. The authors give a suitable notion of classical solution, they establish the solvability of the Cauchy problem and prove H\"older regularity results.
As to the non-autonomous fractional semilinear case, as far as we know the only result is due to \cite{hezhouBull}, where the authors consider almost sectorial operators.

Our interest in the study of problem $(P)$ is due to the fact that many problems in the framework of physical applications such as fractional diffusion advection equations \cite{Main2010}, as well as fractional order models of viscoelasticity \cite{Metzler2000} or  turbulent plasma \cite{DCCLy}, can be recast in the abstract form of problem $(P)$.

Problem $(P)$, in the case $\alpha=1$, with operators having time-dependent domains, has been also investigated in the framework of BVPs in domains with irregular boundaries or interfaces, possibly of fractal type, with different boundary conditions such as Dirichlet, Neumann, Robin or Wentzell boundary conditions. To our knowledge, the first results on non-autonomous semilinear Wentzell-type problems in irregular (or extension) domains are contained in \cite{CLADE} and in \cite{LVnonaut}, where an energy form approach is used. The generalization of these results to the case $0<\alpha<1$ is currently under investigation and it will be object of a forthcoming paper.

In the present paper we investigate the non-autonomous time fractional semilinear problem in a more general abstract setting.
Our aim here is to extend to the fractional non-autonomous case the ideas and methods of \cite{LVnonaut,CLADE} and \cite{Daners} under suitable hypotheses on $J(u)$. In order to use a fixed point argument in a suitable space of continuous functions, a crucial tool is to prove suitable mapping properties for $J(u)$. We remark that our fixed point argument does not involve neither fractional powers of the operator $A(t)$, nor interpolation spaces between the domain $D(A(t))$ and the space $X$. This approach turns out to be crucial in all those cases when the domain $D(A(t))$ is unknown. Instead, the proofs of our well-posedness results deeply rely on the ultracontractivity of the  operator valued functions $S_\alpha$ and $P_\alpha$ defined in Section \ref{sec2}. We stress the fact that the techniques used in \cite{LVnonaut} and \cite{CLADE} to prove the ultracontractivity property cannot be applied to the present case, since the problem is nonlocal in time. Here, the ultracontractivity property is obtained by adapting the techniques in \cite{GWbook} to the non-autonomous case.

The plan of the paper is the following.\\
In Section \ref{preliminari} we introduce the functional setting, the Caputo-type fractional time derivative and the non-autonomous operators $A(t)$.\\
In Section \ref{sec2} we introduce the two families of solution operators $S_\alpha$ and $P_\alpha$ and recall the representation formula for the solution of Problem $(P_L)$ (the linear version of Problem $(P)$).\\
In Section \ref{sec3} we prove the ultracontractivity of the two families of solution operators $S_\alpha$ and $P_\alpha$, see Theorem \ref{ultracontr}.\\
In Section \ref{sec4} we prove that the semilinear problem $(P_S)$ admits a unique mild solution. We prove that the mild solution is global in time under suitable assumptions on the initial datum.\\
In Section \ref{sec5} we present an application to the fractional heat problem $(P_H)$.

\section{Preliminaries}\label{preliminari}
\setcounter{equation}{0}

In this paper, by $C$ we denote possibly different positive constants. We give the dependence of constants on some parameters in parentheses. 

\subsection{Functional setting and fractional-in-time derivatives}\label{spazi funzionali}

Let $X$ be a Banach space endowed with a norm $\|\cdot\|$. By $C([a,b];X)$ we denote the space of continuous functions on a bounded interval $[a,b]\subset\R$ with values in $X$, and endow it with the supremum norm
\begin{equation}\notag
\|u\|_C:=\sup_{a\leq t\leq b} \|u(t)\|.
\end{equation}
Moreover, for $0<k\leq 1$, we denote by $C^k([a,b];X)$ the space
\begin{equation}\notag
C^k([a,b];X)=\left\{u\in C([a,b];X)\,:\,|u|_k:=\sup_{a\leq s<t\leq b} \frac{\|u(t)-u(s)\|}{|t-s|^k}<\infty\right\}
\end{equation}
endowed with the norm $\|u\|_{C^k}:=\|u\|_C+|u|_k$.

By $L^p(a,b;X)$, for $1\leq p\leq\infty$, we denote the Banach space of $X$-valued Bochner integrable functions defined on $[a,b]$ with the norms
\begin{equation}\notag
\begin{split}
&\|u\|_p:=\left(\int_a^b\|u(t)\|^p\,\de t\right)^\frac{1}{p},\quad\text{ for }1\leq p<\infty,\\
&\|u\|_\infty:=\supess_{a\leq t\leq b}\|u(t)\|,\quad\text{ for }p=\infty.
\end{split}
\end{equation}
Moreover, we denote by $\Lm(X,Y)$ the space of linear and continuous operators from a Banach space $X$ to a Banach space $Y$. If $X=Y$, we simply denote this space by $\Lm(X)$.


We recall the notion of fractional-in-time derivatives in the sense of Riemann-Liouville and Caputo by using the notations of the monograph \cite{GWbook}.

Let $\alpha\in (0, 1)$. We define
\begin{equation}\notag
g_\alpha (t)=
\begin{cases}
\displaystyle\frac{t^{\alpha -1}}{\Gamma(\alpha)}\quad &\text{if } t >0,\\
0  &\text{if } t\leq 0,
\end{cases}
\end{equation}
where $\Gamma$ is the usual Gamma function. In the following we will use the next property: for $t>\tau\geq 0$ and $\alpha,\beta>0$,
\begin{equation}\label{propgamma1}
\int_\tau^t (t-s)^{\alpha-1}(s-\tau)^{\beta-1}\,\de s=\frac{\Gamma(\alpha)\Gamma(\beta)}{\Gamma(\alpha+\beta)}(t-\tau)^{\alpha+\beta-1}.
\end{equation}

\begin{definition}\label{2.1.1}
Let $Y$ be a Banach space, $0\leq a<t\leq b$ and let $f\in C([a,b];Y)$ be such that $g_{1-\alpha}\ast f \in W^{1,1} ((a,b); Y)$.
\begin{itemize}
	\item[i)] The \emph{Riemann-Liouville} fractional derivative of order $\alpha \in  (0,1)$  is defined as follows:
$$D^\alpha_t f(t):=\frac{\de}{\de t}(g_{1-\alpha}\ast f)(t)=\frac{\de}{\de t}\int_a^t g_{1-\alpha}(t-\tau) f(\tau)\,\de\tau,$$
for a.e. $t>a$.
	\item[ii)] The \emph{Caputo-type} fractional derivative of order $\alpha \in  (0,1)$ is defined as follows:
$$\partial^\alpha_t  f(t):= D^\alpha_t (f(t)-f (a)),$$
for a.e. $t>a$.
\end{itemize}
\end{definition}

We stress the fact that Definition \ref{2.1.1}-$ii)$ gives a weaker definition of (Caputo) fractional derivative with respect to the original one (see \cite{CAPUTO}), since $f$ is not assumed to be differentiable. Moreover, it holds that $\partial^\alpha_t  (c) = 0$ for every constant $c\in\R$. We refer to the book \cite{Dieth} for further details on fractional derivatives.

\medskip

Now we recall the definition of the Wright type function (see \cite[Formula (28)]{GLM}):
\begin{equation}\notag
\Phi_\alpha(z):=\sum_{n=0}^\infty\frac{(-z)^n}{n!\Gamma(-\alpha n+1-\alpha)},\quad 0<\alpha<1,\,z\in \mathbb{C}.
\end{equation}

From \cite[page 14]{bazthesis}, it follows that $\Phi_\alpha(t)$ is a probability density function, i.e.
\begin{equation}\notag
\Phi_\alpha(t)\geq 0\quad\text{if }t\geq 0,\quad\int_0^{+\infty}\Phi_\alpha(t)\,\de t=1.
\end{equation}
Moreover,
\begin{equation}\label{propgamma2}
\int_0^\infty t^\delta\Phi_\alpha(t)\,\de t=\frac{\Gamma(1+\delta)}{\Gamma(1+\alpha\delta)}\quad \text{ for } -1<\delta<\infty.
\end{equation}
For more properties about the Wright function, among the others we refer to \cite{bazthesis}, \cite{GLM}, \cite{wright}.

\subsection{The non-autonomous operator}\label{sezAT}


Let us consider non-autonomous operators $A(t)$, for $t\in[0,T]$, such that $-A(t)\,: D(A(t))\subset X\to X$ are the infinitesimal generators of analytic semigroups $T_t(\tau)$ for $\tau\geq 0$.

Let us recall some basic notions of spectral analysis. For $t\in[0,T]$, let $\rho(A(t))$ be the resolvent set of $A(t)$. For every $\lambda\in\rho(A(t))$, we define the resolvent of $A(t)$ as $R(\lambda;A(t)):=(\lambda I-A(t))^{-1}$. Moreover, we assume that the spectrum of $A(t)$, denoted by $\sigma(A(t))$, is contained in a sectorial open domain for some fixed angle $\omega\subset(0,\frac{\pi}{2})$, i.e.,
\begin{equation}\notag
\sigma(A(t))\subset\Sigma_\omega:=\{\lambda\in\C\,:\;|\arg\lambda|<\omega\}.
\end{equation}

Now we introduce the main assumptions on the non-autonomous operators $A(\cdot)$, which are the so-called Acquistapace-Terreni conditions \cite{AT} (see also \cite{tanabe,sob}):
\begin{itemize}
	\item[i)] the domain of $A(t)$ is dense in $X$ and is independent of $t$, i.e.,
	\begin{equation}\label{AT1}
	D(A(t))=\X\quad\text{ for every } t\in[0,T];
	\end{equation}
	\item[ii)] the resolvent of $A(t)$ satisfies the following estimate: there exists a constant $M\geq 1$ such that, for every $\lambda\notin\Sigma_\omega$ and $t\in[0,T]$,
	\begin{equation}\label{AT2}
	\|(\lambda I-A(t))^{-1}\|_{\Lm(X)}\leq \frac{M}{1+|\lambda|};
	\end{equation}
	\item[iii)] there exist constants $L>0$ and $\theta\in(0,1]$ such that
	\begin{equation}\label{AT3}
	\|(A(t)-A(s))A(\tau)^{-1}\|_{\Lm(X)}\leq L|t-s|^\theta, \quad\text{ for }t,s,\tau\in[0,T],
	\end{equation}
	i.e. the operator $A(t)A(\tau)^{-1}$ is H\"older continuous in $t$ with exponent $\theta$.
\end{itemize}

\noindent We remark that in \cite[Hypothesis II, page 6]{AT} condition \eqref{AT3} appears with $t=\tau$. But, it can be proved (see \cite[page 110]{friedman}) that Hypothesis II in \cite{AT} actually implies condition \eqref{AT3}.

In what follows, we shall use fractional powers of the operator $A(t)$. Hence, for $0<\nu<1$, we introduce the operator $A(t)^{-\nu}\in\Lm(X)$ as
\begin{equation}\notag
A(t)^{-\nu}:=\frac{\sin\pi\nu}{\pi}\int_0^\infty s^{-\nu}\left(sI+A(t)\right)^{-1}\,\de s.
\end{equation}
We define $A(t)^\nu:=\left(A(t)^{-\nu}\right)^{-1}$ with domain $\X^\nu:=D(A(t)^\nu)$, endowed with the graph norm. We point out that, for some fixed $0<\nu<1$, it holds that
\begin{equation}\notag
D(A(s))\hookrightarrow\X^\nu\quad\text{ for every } 0\leq s\leq T.
\end{equation}

\medskip

We now introduce two operators which will play a crucial role in the rest of the paper. For $0<\alpha<1$, we set
\begin{equation}\label{def phi e psi}
\begin{split}
&\phi_t(\tau):=\int_0^\infty\Phi_\alpha(z) T_t(\tau^\alpha z)\,\de z,\quad\tau\geq 0,\\
&\psi_t(\tau):=\alpha\tau^{\alpha-1}\int_0^\infty z\Phi_\alpha(z)T_t(\tau^\alpha z)\,\de z,\quad \tau>0.
\end{split}
\end{equation}

Due to the previous assumptions, the analytic semigroups $T_t(\tau)$, for $\tau\geq 0$, satisfy the inequalities
\begin{equation}\label{bdd prop}
\begin{split}
&\|T_t(\tau)\|_{\Lm(X)}\leq M\quad\text{ for }\tau\geq 0,\\[2mm]
&\|A(t)^\nu T_t(\tau)\|_{\Lm(X)}\leq C\tau^{-\nu}\quad\text{ for }\tau>0,\nu>0,
\end{split}
\end{equation}
where $C$ is a positive constant.

\section{Representation formula for the linear case}\label{sec2}
\setcounter{equation}{0}

Aim of this section is to give a notion of solution of the following problem
\begin{equation}\notag
	(P_L)
	\begin{cases}
	\Dta u(t)-A(t)u(t)=f(t) \quad &\text{ for every }t \in (0,T),	\\
		u(0) =u_0,					
		\end{cases}
\end{equation}
where $f$ and $u_0$ are given functions in suitable Banach spaces, and to provide a representation formula for the possibly unique solution $u$.

Let us consider the following Volterra integral equation in $\Lm(X)$:
\begin{equation}\label{volterra}
\Lambda(t,\tau)=\Psi(t,\tau)+\int_\tau^t\Theta(t,s)\Lambda(s,\tau)\,\de s,\quad 0\leq\tau<t\leq T,
\end{equation}
where $\Psi(t,\tau)$ and $\Theta(t,\tau)$ are continuous in the uniform operator topology on $\Lm(X)$ and satisfy the following estimates, for some fixed exponents $\gamma,\beta>0$ and positive constant $C$:
\begin{equation}\notag
\|\Psi(t,\tau)\|_{\Lm(X)}\leq C(t-\tau)^{\gamma-1},\quad\|\Theta(t,\tau)\|_{\Lm(X)}\leq C(t-\tau)^{\beta-1},\quad 0\leq\tau<t\leq T.
\end{equation}

We now construct a unique solution to \eqref{volterra} by an iterative method, according to \cite{hezhouFCAA}.

For $m\in\N\cup\{0\}$, we define by induction
\begin{equation}\label{iterativo}
\Lambda_0(t,\tau)=\Psi(t,\tau),\quad \Lambda_{m+1}(t,\tau)=\Psi(t,\tau)+\int_\tau^t\Theta(t,s)\Lambda_m(s,\tau)\,\de s.
\end{equation}
We set
\begin{equation}\label{somma}
\Lambda(t,\tau)=\sum_{m=0}^\infty\Lambda_m(t,\tau).
\end{equation}

\begin{lemma}[Lemma 6 in \cite{hezhouFCAA}] The operator $\Lambda(t,\tau)$ defined in \eqref{somma} is the unique solution of \eqref{volterra}. Moreover, it is continuous in the uniform operator topology on $\Lm(X)$ for $0\leq\tau\leq t-\varepsilon\leq T$ for every $\varepsilon>0$ and there exists a constant $C>0$ such that
\begin{equation}\notag
\|\Lambda(t,\tau)\|_{\Lm(X)}\leq C(t-\tau)^{\gamma-1}E_{\gamma,\beta}(C(t-\tau)),\quad 0\leq\tau<t\leq T,
\end{equation}
where $E_{\gamma,\beta}$ is the Mittag-Leffler function.
\end{lemma}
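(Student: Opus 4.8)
The plan is to run the classical method of successive approximations for weakly singular Volterra equations, the only genuinely new bookkeeping being the fractional exponents, which is what forces the Mittag-Leffler function to appear. First I would isolate the iterated kernels: setting $Q_0(t,\tau):=\Psi(t,\tau)$ and $Q_m(t,\tau):=\int_\tau^t\Theta(t,s)Q_{m-1}(s,\tau)\,\de s$ for $m\ge 1$, the recursion \eqref{iterativo} telescopes, since $\Lambda_{m}-\Lambda_{m-1}=\int_\tau^t\Theta(t,s)(\Lambda_{m-1}-\Lambda_{m-2})\,\de s$, so that $\Lambda_m=\sum_{j=0}^m Q_j$ and the series \eqref{somma} is nothing but $\Lambda=\sum_{m=0}^\infty Q_m$. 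The core computation is then an induction proving
\begin{equation}\notag
\|Q_m(t,\tau)\|_{\Lm(X)}\le C^{m+1}\frac{\Gamma(\gamma)\Gamma(\beta)^m}{\Gamma(\gamma+m\beta)}(t-\tau)^{\gamma+m\beta-1},\qquad 0\le\tau<t\le T.
\end{equation}
The base case $m=0$ is exactly the hypothesis on $\Psi$; for the inductive step I feed the bound on $Q_{m-1}$ into the defining integral, estimate $\|\Theta(t,s)\|_{\Lm(X)}\le C(t-s)^{\beta-1}$, and evaluate $\int_\tau^t(t-s)^{\beta-1}(s-\tau)^{\gamma+(m-1)\beta-1}\,\de s$ exactly via the Beta-type identity \eqref{propgamma1}, which reproduces the next Gamma quotient.

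Summing these termwise bounds yields
\begin{equation}\notag
\sum_{m=0}^\infty\|Q_m(t,\tau)\|_{\Lm(X)}\le C\,\Gamma(\gamma)\,(t-\tau)^{\gamma-1}\sum_{m=0}^\infty\frac{\big(C\,\Gamma(\beta)\,(t-\tau)^\beta\big)^m}{\Gamma(\gamma+m\beta)},
\end{equation}
and the last series is, up to the normalization of the constant and the index convention, the two-parameter Mittag-Leffler function $E_{\gamma,\beta}$ of the statement. Since $\Gamma(\gamma+m\beta)$ grows super-exponentially in $m$, this series is entire in its argument; hence it converges for every $(t,\tau)$, is bounded on $[0,T]$, and gives the claimed estimate. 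Absolute convergence also legitimates interchanging the summation with the integral in \eqref{volterra}: substituting $\Lambda=\sum_m Q_m$ into the right-hand side produces $Q_0+\sum_{m\ge0}Q_{m+1}=\sum_{m\ge0}Q_m=\Lambda$, so $\Lambda$ indeed solves the Volterra equation.

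For continuity in the uniform operator topology on $\{0\le\tau\le t-\varepsilon\le T\}$, I would first verify that each $Q_m$ is continuous there: $Q_0=\Psi$ is continuous by assumption, and continuity propagates through the integral defining $Q_m$ because the singularities $(t-s)^{\beta-1}$ and $(s-\tau)^{\gamma+(m-1)\beta-1}$ are integrable (here $\beta,\gamma>0$), so dominated convergence applies to the operator-valued integrand. On the set $t-\tau\ge\varepsilon$ the termwise bounds above are dominated by a convergent numerical series independent of $(t,\tau)$, so the Weierstrass $M$-test gives uniform convergence and the limit $\Lambda$ is continuous. Uniqueness then follows from the same estimate: if $\Lambda$ and $\tilde\Lambda$ are two solutions obeying a bound $\|W(s,\tau)\|_{\Lm(X)}\le K(s-\tau)^{\gamma-1}$, their difference $W:=\Lambda-\tilde\Lambda$ satisfies the homogeneous identity $W(t,\tau)=\int_\tau^t\Theta(t,s)W(s,\tau)\,\de s$, and iterating it $m$ times reproduces the kernel estimate $\|W(t,\tau)\|_{\Lm(X)}\le K\,C^{m}\frac{\Gamma(\gamma)\Gamma(\beta)^{m}}{\Gamma(\gamma+m\beta)}(t-\tau)^{\gamma+m\beta-1}$, whose right-hand side tends to $0$ as $m\to\infty$ by the Mittag-Leffler tail; hence $W\equiv0$.

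I expect the main obstacle to be purely technical rather than conceptual. The algebraic heart, namely the induction that generates the Mittag-Leffler series, is dictated by identity \eqref{propgamma1} and is routine once the iterated kernels are set up. The delicate points are instead the careful justification of the continuity of the $Q_m$ near the diagonal $\tau=t$ and near $s=\tau$, and the interchange of the infinite summation with the integral, all in the presence of the two integrable singularities; these require the uniform domination on $t-\tau\ge\varepsilon$ described above and an honest application of dominated convergence to Bochner integrals of operator-valued functions.
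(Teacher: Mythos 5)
Your proof is correct and is precisely the classical iterated-kernel argument that underlies the cited result (the paper itself offers no proof of this lemma, quoting Lemma 6 of the He--Zhou reference verbatim). You also rightly read the series \eqref{somma} as the sum of the increments $Q_m=\Lambda_m-\Lambda_{m-1}$, i.e.\ as $\lim_{m\to\infty}\Lambda_m$, which is the only sensible interpretation of the recursion \eqref{iterativo}; the remaining discrepancies between your bound and the stated one (the index order and the argument of $E_{\gamma,\beta}$) are cosmetic and absorbed into the generic constant on the bounded interval $[0,T]$.
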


\medskip

\noindent For the definition and properties of the Mittag-Leffler functions, we refer to \cite{podlubny,zhou2014}.

\medskip

We introduce two families of operators
\begin{equation}\notag
\tilde{Q}(t,\tau):=-(A(t)-A(\tau))\phi_\tau(t-\tau)\quad\tilde{R}(t,\tau):=-(A(t)-A(\tau))\psi_\tau(t-\tau).
\end{equation}
Then (see Lemma 7 in \cite{hezhouFCAA}) $\tilde{Q}(t,\tau)$ and $\tilde{R}(t,\tau)$ are continuous in the uniform operator topology for $0\leq\tau\leq t-\varepsilon\leq T$ for every $\varepsilon>0$ and there exists a constant $C>0$ such that
\begin{equation}\notag
\|\tilde{Q}(t,\tau)\|_{\Lm(X)}\leq C(t-\tau)^{\bar\omega-1},\quad\|\tilde{R}(t,\tau)\|_{\Lm(X)}\leq C(t-\tau)^{\theta-1},
\end{equation}
for $0\leq\tau<t\leq T$, where $\bar\omega:=\theta-\alpha+1>0$.

We now consider two operator-valued integral equations
\begin{equation}\label{volterraQR}
\begin{split}
&\tilde{Q}(t,\tau)+\int_\tau^t\tilde{R}(t,s)Q(s,\tau)\,\de s=Q(t,\tau),\\
&\tilde{R}(t,\tau)+\int_\tau^t\tilde{R}(t,s)R(s,\tau)\,\de s=R(t,\tau).
\end{split}
\end{equation}

As before, we define inductively, for $m\in\N\cup\{0\}$, 
\begin{equation}\notag
\begin{split}
Q_0(t,\tau)=\tilde{Q}(t,\tau),\quad Q_{m+1}(t,\tau)=\tilde{Q}(t,\tau)+\int_\tau^t\tilde{R}(t,s)Q_m(s,\tau)\,\de s,\\
R_0(t,\tau)=\tilde{R}(t,\tau),\quad R_{m+1}(t,\tau)=\tilde{R}(t,\tau)+\int_\tau^t\tilde{R}(t,s)R_m(s,\tau)\,\de s,
\end{split}
\end{equation}
and we set
\begin{equation}\notag
Q(t,\tau)=\sum_{m=0}^\infty Q_m(t,\tau),\quad R(t,\tau)=\sum_{m=0}^\infty R_m(t,\tau).
\end{equation}

\begin{lemma}[Lemma 8 in \cite{hezhouFCAA}] The operators $Q(t,\tau)$ and $R(t,\tau)$ defined above are the unique solutions of \eqref{volterraQR} and they are continuous in the uniform operator topology on $\Lm(X)$ for $0\leq\tau\leq t-\varepsilon\leq T$ for $\varepsilon>0$. Moreover, there exists a constant $C>0$ such that for $0\leq\tau<t\leq T$
\begin{equation}\label{stimaQR}
\|Q(t,\tau)\|_{\Lm(X)}\leq C(t-\tau)^{\bar\omega-1},\quad\|R(t,\tau)\|_{\Lm(X)}\leq C(t-\tau)^{\theta-1},
\end{equation}
where $\bar\omega=\theta-\alpha+1$.
\end{lemma}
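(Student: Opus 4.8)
The plan is to recognize that both operator-valued integral equations in \eqref{volterraQR} are particular instances of the abstract Volterra equation \eqref{volterra}, and then to invoke the Volterra lemma (Lemma 6 in \cite{hezhouFCAA}) recalled above. Concretely, for the equation determining $Q$ I would set $\Psi(t,\tau):=\tilde{Q}(t,\tau)$ and $\Theta(t,\tau):=\tilde{R}(t,\tau)$; for the equation determining $R$ I would instead take both the forcing term and the kernel equal to $\tilde{R}$, i.e. $\Psi(t,\tau)=\Theta(t,\tau):=\tilde{R}(t,\tau)$. With these identifications the inductive definitions of $Q_m$ and $R_m$ coincide term by term with the abstract iteration \eqref{iterativo}, so that the series $Q=\sum_m Q_m$ and $R=\sum_m R_m$ are exactly the ones produced by \eqref{somma}.

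It then remains to check that the hypotheses of the Volterra lemma are met, which is immediate from the properties of $\tilde{Q}$ and $\tilde{R}$ established just before the statement: both are continuous in the uniform operator topology on $\{0\leq\tau\leq t-\varepsilon\leq T\}$, and they satisfy $\|\tilde{Q}(t,\tau)\|_{\Lm(X)}\leq C(t-\tau)^{\bar\omega-1}$ and $\|\tilde{R}(t,\tau)\|_{\Lm(X)}\leq C(t-\tau)^{\theta-1}$, with $\bar\omega=\theta-\alpha+1>0$ and $\theta\in(0,1]$, so the relevant exponents are strictly positive. Hence for the $Q$-equation the abstract parameters are $\gamma=\bar\omega$, $\beta=\theta$, while for the $R$-equation $\gamma=\beta=\theta$; in both cases $\gamma,\beta>0$. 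Applying the Volterra lemma yields at once that $Q$ and $R$ are the unique solutions of \eqref{volterraQR}, that they are continuous in the uniform operator topology for $0\leq\tau\leq t-\varepsilon\leq T$, and that
\[
\|Q(t,\tau)\|_{\Lm(X)}\leq C(t-\tau)^{\bar\omega-1}E_{\bar\omega,\theta}(C(t-\tau)),\qquad \|R(t,\tau)\|_{\Lm(X)}\leq C(t-\tau)^{\theta-1}E_{\theta,\theta}(C(t-\tau)).
\]

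Finally, to reduce these to the clean form \eqref{stimaQR} I would exploit that $t-\tau\leq T$ is bounded: since each Mittag-Leffler function $E_{\gamma,\beta}$ is entire and continuous, hence bounded on the compact interval $[0,CT]$, the factors $E_{\bar\omega,\theta}(C(t-\tau))$ and $E_{\theta,\theta}(C(t-\tau))$ are dominated by a constant depending only on $\alpha,\theta,C$ and $T$, which can be absorbed into $C$. This gives the stated estimates \eqref{stimaQR}. I do not expect a genuine obstacle here: the proof is essentially a reduction to the already established Volterra lemma, and the only points requiring care are the bookkeeping identification of the two concrete iterations with the abstract one, and the observation that nothing in the Volterra lemma forces the forcing term and the kernel to be distinct, so the choice $\Psi=\Theta=\tilde{R}$ in the $R$-equation is legitimate.
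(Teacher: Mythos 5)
Your proposal is correct and is essentially the argument the paper intends: the paper does not reprove this lemma (it is quoted from the reference), but all the preceding material — the abstract Volterra lemma with the Mittag--Leffler bound, the continuity and the estimates $\|\tilde{Q}(t,\tau)\|_{\Lm(X)}\leq C(t-\tau)^{\bar\omega-1}$, $\|\tilde{R}(t,\tau)\|_{\Lm(X)}\leq C(t-\tau)^{\theta-1}$ with $\bar\omega,\theta>0$, and the iterations defining $Q_m$ and $R_m$ — is set up precisely for the reduction you describe, with $(\Psi,\Theta)=(\tilde{Q},\tilde{R})$ for $Q$ and $(\Psi,\Theta)=(\tilde{R},\tilde{R})$ for $R$. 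Your final step of absorbing the Mittag--Leffler factors into the constant via their boundedness on $[0,CT]$ is also the standard (and correct) way to pass from the abstract bound to \eqref{stimaQR}.
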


\begin{lemma}[Lemma 10 in \cite{hezhouFCAA}] Let $\nu\in(0,1)$ be fixed. For every $u\in\X^\nu$ and $0<\beta\leq\min\{\theta,\alpha\nu\}$, there exists a constant $C>0$ such that
\begin{equation}\notag
\|Q(t,\tau)u-Q(s,\tau)u\|\leq C(t-s)^\beta(s-\tau)^{-\alpha}\|u\|_{\X^\nu}
\end{equation}
for every $0\leq\tau<s\leq t\leq T$.

For every $0<\beta<\theta\leq 1$, there exists a constant $C>0$ such that
\begin{equation}\notag
\|R(t,\tau)-R(s,\tau)\|\leq C(t-s)^\beta(s-\tau)^{\theta-\beta-1}
\end{equation}
for every $0\leq\tau<s\leq t\leq T$.
\end{lemma}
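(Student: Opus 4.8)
The plan is to treat both estimates through the Volterra equations \eqref{volterraQR}, which transfer regularity from the free kernels $\tilde{Q},\tilde{R}$ to the resolved kernels $Q,R$. Writing $K$ for either $Q$ or $R$ and using $K=\tilde{K}+\tilde{R}\ast K$, I would split the increment as
\[
K(t,\tau)-K(s,\tau)=\bigl[\tilde{K}(t,\tau)-\tilde{K}(s,\tau)\bigr]+\int_s^t\tilde{R}(t,r)K(r,\tau)\,\de r+\int_\tau^s\bigl[\tilde{R}(t,r)-\tilde{R}(s,r)\bigr]K(r,\tau)\,\de r,
\]
for $0\le\tau<s\le t\le T$. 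The last two integrals are handled with the uniform bounds \eqref{stimaQR} on $K$ and $\|\tilde{R}(t,r)\|_{\Lm(X)}\le C(t-r)^{\theta-1}$: the middle one integrates directly (producing a factor $(t-s)^\theta$, which I write as $(t-s)^\beta(t-s)^{\theta-\beta}$ and absorb via $t-s\le t-\tau\le T$), while the third needs the first-slot H\"older continuity of $\tilde{R}$ and is then reduced to a Beta-type convolution through \eqref{propgamma1}. Matching the surviving powers of $(s-\tau)$ to the target exponents is precisely what forces the admissible ranges $0<\beta<\theta$ (for $R$) and $0<\beta\le\min\{\theta,\alpha\nu\}$ (for $Q$).

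The core of the argument is therefore the first-slot H\"older continuity of the free kernels, for which I would use the algebraic identity
\[
\tilde{K}(t,\tau)-\tilde{K}(s,\tau)=-(A(t)-A(s))\,k_\tau(t-\tau)-(A(s)-A(\tau))\bigl[k_\tau(t-\tau)-k_\tau(s-\tau)\bigr],
\]
with $k=\phi$ for $\tilde{Q}$ and $k=\psi$ for $\tilde{R}$. In each term the difference $A(s)-A(\tau)$ is peeled off by factoring $(A(s)-A(\tau))=\bigl[(A(s)-A(\tau))A(\tau)^{-1}\bigr]A(\tau)$ and invoking \eqref{AT3}, which yields a gain $|s-\tau|^\theta$; since $k_\tau$ is built from $T_\tau$ it commutes with powers of $A(\tau)$, so the remaining factor $A(\tau)k_\tau$ (or its increment in the time argument) is controlled by the Lemmas on $\psi_t$ above. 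For $\tilde{R}$ this is immediate: the first summand uses $\|(A(t)-A(s))\psi_\tau(t-\tau)\|_{\Lm(X)}\le C(t-\tau)^{-\alpha}|t-s|^\theta$ and the second uses $\|A(\tau)(\psi_\tau(s-\tau)-\psi_\tau(t-\tau))\|_{\Lm(X)}\le C\,|t-s|\,(s-\tau)^{-1}(t-\tau)^{-1}$, and elementary manipulations (splitting $|t-s|^\theta$ and bounding $|t-s|\le t-\tau$) produce $C(t-s)^\beta(s-\tau)^{\theta-\beta-1}$ in both cases.

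For $\tilde{Q}$ the naive bound fails, since $\|A(\tau)\phi_\tau(t-\tau)\|$ leads to the divergent moment $\int_0^\infty z^{-1}\Phi_\alpha(z)\,\de z$; this is exactly where $u\in\X^\nu$ enters. I would move a fractional power onto the datum, $A(\tau)\phi_\tau(\cdot)u=A(\tau)^{1-\nu}\phi_\tau(\cdot)\,A(\tau)^\nu u$ with $\|A(\tau)^\nu u\|\le C\|u\|_{\X^\nu}$, and estimate $\|A(\tau)^{1-\nu}\phi_\tau(t-\tau)\|_{\Lm(X)}\le C(t-\tau)^{-\alpha(1-\nu)}$ using $\|A(\tau)^{1-\nu}T_\tau(\sigma)\|\le C\sigma^{-(1-\nu)}$ from \eqref{bdd prop} together with the now-convergent moment $\int_0^\infty z^{\nu-1}\Phi_\alpha(z)\,\de z=\Gamma(\nu)/\Gamma(1+\alpha(\nu-1))$ from \eqref{propgamma2}. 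For the $\phi$-increment I would use $\frac{\de}{\de\sigma}\phi_\tau(\sigma)=-A(\tau)\psi_\tau(\sigma)$ to write $\phi_\tau(t-\tau)-\phi_\tau(s-\tau)=-\int_{s-\tau}^{t-\tau}A(\tau)\psi_\tau(\sigma)\,\de\sigma$, estimate $\|A(\tau)^{2-\nu}\psi_\tau(\sigma)\|_{\Lm(X)}\le C\sigma^{\alpha\nu-\alpha-1}$ (again via \eqref{propgamma2}), and extract a $(t-s)^\beta$ factor from the resulting power difference by the elementary interpolation $a^p-b^p\le C(b-a)^\beta a^{p-\beta}$ valid for $0<a<b$, $p\in(-1,0)$, $\beta\in[0,1]$. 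Assembling the pieces gives $C(t-s)^\beta(s-\tau)^{-\alpha}\|u\|_{\X^\nu}$.

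I expect the main obstacle to be exactly this last step for $\tilde{Q}$: one must simultaneously exploit the $\X^\nu$-regularity of $u$ to keep every Wright-function moment finite, and trade the $\phi$-increment for an integral of $A(\tau)\psi_\tau$ and interpolate so that a genuine H\"older factor $(t-s)^\beta$ appears while the singular exponent collapses to precisely $-\alpha$. The constraint $\beta\le\min\{\theta,\alpha\nu\}$ is what guarantees that, after applying \eqref{propgamma1} to the convolution in the third integral term, every surplus power of $(s-\tau)$ is nonnegative and can be absorbed into a constant depending on $T$.
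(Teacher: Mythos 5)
The paper does not actually prove this lemma: it is imported verbatim as Lemma 10 of \cite{hezhouFCAA}, so there is no internal argument to compare yours against. That said, your proposal reconstructs what is essentially the standard (and surely the intended) proof: the three-term splitting of $K(t,\tau)-K(s,\tau)$ through the Volterra equation, the algebraic splitting of $A(t)-A(\tau)$ combined with \eqref{AT3}, the transfer of a fractional power onto the datum via $A(\tau)\phi_\tau(\cdot)u=A(\tau)^{1-\nu}\phi_\tau(\cdot)A(\tau)^{\nu}u$ to keep the Wright-function moments finite, the identity $\frac{\de}{\de\sigma}\phi_\tau(\sigma)=-A(\tau)\psi_\tau(\sigma)$ for the $\phi$-increment, and the interpolation $a^{p}-b^{p}\le C(b-a)^{\beta}a^{p-\beta}$. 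I have checked the exponent bookkeeping: it closes for the $R$-estimate (where the statement requires $\beta<\theta$ strictly) and for every piece of the $Q$-estimate except one.

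The step that fails is the third integral in the $Q$-decomposition at the endpoint $\beta=\theta$, which the statement permits whenever $\theta\le\alpha\nu$. Your plan bounds
\begin{equation*}
\Bigl\|\int_\tau^s\bigl[\tilde R(t,r)-\tilde R(s,r)\bigr]Q(r,\tau)u\,\de r\Bigr\|\le C(t-s)^{\beta}\int_\tau^s(s-r)^{\theta-\beta-1}(r-\tau)^{\bar\omega-1}\,\de r\,\|u\|,
\end{equation*}
and for $\beta=\theta$ the kernel $(s-r)^{-1}$ is not integrable at $r=s$, so \eqref{propgamma1} cannot be applied; the constraint $\beta\le\min\{\theta,\alpha\nu\}$ controls the surplus powers of $(s-\tau)$, as you say, but it does not cure this endpoint divergence. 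The natural repair --- splitting the integral at $r=s-(t-s)$ and using the raw bounds $\|\tilde R(t,r)\|,\|\tilde R(s,r)\|\le C(\cdot)^{\theta-1}$ on the near piece --- still leaves a logarithmic factor $\log\frac{s-\tau}{t-s}$ from the far piece, so the argument as sketched yields the $Q$-estimate only for $0<\beta<\theta$ (together with $\beta\le\alpha\nu$), not for $\beta=\theta$. Everything else in the proposal is correct; you should either restrict the claim to $\beta$ strictly below $\theta$ or supply a genuinely new argument for the borderline exponent, since neither the improved bound $\|Q(r,\tau)u\|\le C(r-\tau)^{\theta-\alpha+\alpha\nu}\|u\|_{\X^{\nu}}$ (which your first-summand computation already gives) nor the dyadic splitting removes the logarithm by itself.
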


\bigskip

Our aim is now to construct two solution operators and to show some of their properties. In order to do so, let $Q(t,\tau)$ and $R(t,\tau)$ be the unique solutions of \eqref{volterraQR} for $0\leq\tau<t\leq T$. We set
\begin{equation}\label{defUV}
U(t,\tau):=\int_\tau^t\psi_s(t-s)Q(s,\tau)\,\de s,\quad V(t,\tau):=\int_\tau^t \psi_s(t-s)R(s,\tau)\,\de s.
\end{equation}
We define the solution operators $S_\alpha(t,\tau)$ and $P_\alpha(t,\tau)$ as
\begin{equation}\label{defSP}
S_\alpha(t,\tau):=\phi_\tau(t-\tau)+U(t,\tau)\,\quad P_\alpha(t,\tau):=\psi_\tau(t-\tau)+V(t,\tau).
\end{equation}

\begin{lemma}[Lemma 11 in \cite{hezhouFCAA}] The solution operators $S_\alpha(t,\tau)$ and $P_\alpha(t,\tau)$ defined in \eqref{defSP} are strongly continuous on $X$ for $t\in[\tau+\varepsilon,T]$ for $\varepsilon>0$. Moreover, there exists a constant $C>0$ such that
\begin{equation}\notag
\|S_\alpha(t,\tau)\|_{\Lm(X)}\leq C \text{ for }t\in[\tau,T],\quad\|P_\alpha(t,\tau)\|_{\Lm(X)}\leq C(t-\tau)^{\alpha-1} \text{ for }t\in(\tau,T].
\end{equation}
\end{lemma}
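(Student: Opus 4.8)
The plan is to prove the two assertions — the uniform bounds and the strong continuity — separately, in each case splitting $S_\alpha$ and $P_\alpha$ into the leading singular term ($\phi_\tau(t-\tau)$, resp.\ $\psi_\tau(t-\tau)$) and the integral remainder $U(t,\tau)$, resp.\ $V(t,\tau)$, of \eqref{defUV}. For the norm bounds I would argue directly. The leading terms are controlled by the bounds $\|\phi_\tau(t-\tau)\|_{\Lm(X)}\le M$ and $\|\psi_\tau(t-\tau)\|_{\Lm(X)}\le M g_\alpha(t-\tau)=\tfrac{M}{\Gamma(\alpha)}(t-\tau)^{\alpha-1}$ established earlier. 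For the integral terms I would insert the same estimate on $\psi_s(t-s)$ together with the bounds \eqref{stimaQR} on $Q$ and $R$, and then evaluate the resulting scalar integrals by the Beta-type identity \eqref{propgamma1}, noting that both $\alpha$ and $\bar\omega=\theta-\alpha+1$ are strictly positive so that \eqref{propgamma1} applies.

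Concretely, for $S_\alpha$ I would estimate
\begin{equation}\notag
\|U(t,\tau)\|_{\Lm(X)}\le\frac{MC}{\Gamma(\alpha)}\int_\tau^t(t-s)^{\alpha-1}(s-\tau)^{\bar\omega-1}\,\de s=\frac{MC\,\Gamma(\bar\omega)}{\Gamma(\alpha+\bar\omega)}(t-\tau)^{\theta},
\end{equation}
using $\alpha+\bar\omega-1=\theta$; since $(t-\tau)^\theta\le T^\theta$, this gives $\|S_\alpha(t,\tau)\|_{\Lm(X)}\le M+C'T^\theta$ on $[\tau,T]$. Likewise, for $P_\alpha$,
\begin{equation}\notag
\|V(t,\tau)\|_{\Lm(X)}\le\frac{MC}{\Gamma(\alpha)}\int_\tau^t(t-s)^{\alpha-1}(s-\tau)^{\theta-1}\,\de s=\frac{MC\,\Gamma(\theta)}{\Gamma(\alpha+\theta)}(t-\tau)^{\alpha+\theta-1},
\end{equation}
and writing $(t-\tau)^{\alpha+\theta-1}=(t-\tau)^{\theta}(t-\tau)^{\alpha-1}\le T^\theta(t-\tau)^{\alpha-1}$ yields $\|P_\alpha(t,\tau)\|_{\Lm(X)}\le C(t-\tau)^{\alpha-1}$ on $(\tau,T]$.

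For the strong continuity on $[\tau+\varepsilon,T]$ I would fix $x\in X$ and show that $(t,\tau)\mapsto S_\alpha(t,\tau)x$ and $(t,\tau)\mapsto P_\alpha(t,\tau)x$ are continuous. The leading terms are strongly continuous because of their Wright-function representations \eqref{def phi e psi}: dominated convergence against the probability density $\Phi_\alpha$, with $\|T_t(\cdot)\|_{\Lm(X)}\le M$ as dominating bound, reduces the question to the strong continuity of $T_t(\cdot)x$ jointly in its arguments, which follows from strong continuity in the time variable and the Hölder dependence on $t$ encoded in \eqref{AT3}. For the integral remainders I would remove the moving domain and the endpoint singularities (at $s=\tau$ from $Q,R$ and at $s=t$ from $\psi$) by the substitution $s=\tau+r(t-\tau)$, $r\in[0,1]$, which fixes the domain and gives
\begin{equation}\notag
U(t,\tau)x=(t-\tau)\int_0^1\psi_{\tau+r(t-\tau)}\big((t-\tau)(1-r)\big)\,Q\big(\tau+r(t-\tau),\tau\big)x\,\de r,
\end{equation}
and analogously for $V$. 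On the region $t-\tau\ge\varepsilon$ the integrand is strongly continuous in $(t,\tau)$ for a.e.\ $r\in(0,1)$ — using the continuity of $Q,R$ in the uniform operator topology from the lemma yielding \eqref{stimaQR} in the interior $r>0$, and the joint continuity of $\psi_s(\sigma)$ (for $\sigma$ bounded away from $0$, i.e.\ $r<1$) furnished by the Hölder estimates of the preceding lemmas — while the above bounds provide the $(t,\tau)$-uniform, $r$-integrable dominating function $C\,r^{\bar\omega-1}(1-r)^{\alpha-1}$, resp.\ $C\,r^{\theta-1}(1-r)^{\alpha-1}$. Dominated convergence then delivers continuity of $U(\cdot,\cdot)x$ and $V(\cdot,\cdot)x$, hence of $S_\alpha(\cdot,\cdot)x$ and $P_\alpha(\cdot,\cdot)x$.

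The main obstacle is precisely the strong continuity of the integral terms: one must simultaneously handle two integrable singularities and the variation of the integration limits, and justify passing to the limit under the integral sign. The change of variables normalising the domain to $[0,1]$ is the key device, since it renders the dominating function independent of $(t,\tau)$ and thereby legitimises dominated convergence; verifying the almost-everywhere-in-$r$ strong continuity of the integrand, which rests on the joint continuity of $\psi_s(t-s)$ together with the uniform-operator continuity of $Q$ and $R$ away from the diagonal, is the delicate point.
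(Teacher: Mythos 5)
The paper does not actually prove this lemma: it is imported verbatim as Lemma~11 of \cite{hezhouFCAA}, so there is no in-house argument to compare against. Your reconstruction is correct and is essentially the standard proof one finds in that reference. The norm bounds are exactly right: the exponent bookkeeping $\alpha+\bar\omega-1=\theta$ and $\alpha+\theta-1=(\alpha-1)+\theta$ checks out, the Beta identity \eqref{propgamma1} applies since $\alpha,\bar\omega,\theta>0$, and factoring out $(t-\tau)^\theta\le T^\theta$ gives the stated constants. Your treatment of the strong continuity via the substitution $s=\tau+r(t-\tau)$ is a clean way to obtain a $(t,\tau)$-independent integrable majorant $C\,r^{\bar\omega-1}(1-r)^{\alpha-1}$ and legitimise dominated convergence; the pointwise-in-$r$ convergence is indeed covered by the uniform-operator continuity of $Q,R$ off the diagonal and by the operator-norm continuity of $\psi_s(\sigma)$ for $\sigma$ bounded below, which follows from the displayed estimates on $A(t)(\psi_{t_1}(\tau)-\psi_{t_2}(\tau))$ and $A(t)(\psi_t(\tau_1)-\psi_t(\tau_2))$ after composing with the uniformly bounded $A(t)^{-1}$. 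The only step you gloss over is the joint strong continuity of $(\tau,\sigma)\mapsto T_\tau(\sigma)x$ in the \emph{non-autonomous} parameter $\tau$, which is not among the quoted lemmas: it requires the contour-integral representation of the analytic semigroup together with \eqref{AT2}--\eqref{AT3} (or, equivalently, the Hölder continuity of $\tau\mapsto A(\tau)A(s)^{-1}$ transferred to the resolvents). This is standard Acquistapace--Terreni/Tanabe material, but if you were writing the proof out in full it is the one ingredient you would need to cite or verify explicitly.
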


We remark that, in the autonomous case, i.e. when $-A(t)\equiv -A$ is independent of $t$, then the solution operators $S_\alpha(t,\tau)$ and $P_\alpha(t,\tau)$ defined in \eqref{defSP} reduce to the \lq\lq usual" solution operators (known also as resolvent families in literature) $S_\alpha(t)$ and $P_\alpha(t)$ defined as
\begin{equation}\notag
S_\alpha (t):=\int_0^{+\infty} \Phi_\alpha(s) T(s t^\alpha)\,\de s,\quad P_\alpha (t):= \alpha t^{\alpha -1 }\int_0^{+\infty}  s\Phi_\alpha(s) T(s t^\alpha)\,\de s,
\end{equation}
where $T(t)$ is the analytic semigroup generated by $-A$. For more details, we refer to \cite{zhou2014}, see also \cite{GWbook}.

\bigskip

We now give the notion of classical solution of problem $(P_L)$.

\begin{definition}[see Definition 3 in \cite{hezhouFCAA}] A function $u$ is a \emph{classical solution} of problem $(P_L)$ if $u$ is continuous and it satisfies problem $(P)$ on $[0,T]$ with $\Dta u\in C((0,T];X)$ and $u\in\X\equiv D(A(t))$ for every $t\in (0,T]$.
\end{definition}

The following existence and uniqueness result holds. We refer to Theorems 1 and 2 in \cite{hezhouFCAA}.

\begin{theorem}\label{exun lineare} Let $f\in C^k([0,T];X)$ and $\nu\in(0,1)$. Let assumptions \eqref{AT1}-\eqref{AT2}-\eqref{AT3} hold. Then for every $u_0\in\X^\nu$ problem $(P_L)$ admits a classical solution $u$ given by
\begin{equation}\label{soluzioneclassica}
u(t)=S_{\alpha}(t,0)u_0+\int_0^t P_{\alpha}(t,\tau)f(\tau)\,\de\tau,\quad t\in[0,T].
\end{equation}
Moreover, if $u_0\in\X$, the solution is unique.
\end{theorem}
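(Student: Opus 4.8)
The plan is to verify that the representation formula \eqref{soluzioneclassica} defines a classical solution, and then to prove uniqueness separately. The core idea is that $S_\alpha(t,\tau)$ and $P_\alpha(t,\tau)$ were constructed precisely so that $\phi_\tau$ and $\psi_\tau$ solve the \emph{frozen-coefficient} (autonomous) fractional problem, while the correction terms $U$ and $V$ built from the Volterra kernels $Q,R$ absorb the commutator errors $(A(t)-A(\tau))$ coming from the time dependence. So the strategy is to substitute \eqref{soluzioneclassica} into the equation $\Dta u = A(t)u + f$ and check that the identities bundled in Lemmas~1--5 make all the error terms cancel.

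First I would treat the homogeneous part $u_h(t):=S_\alpha(t,0)u_0$ and show $\Dta u_h = A(t)u_h$. The starting point is Lemma~1: part~$ii)$ gives $\phi_\tau(t-s)=(g_{1-\alpha}\ast\psi_\tau)(t-s)$, which is exactly the fractional-integration relation needed so that applying $\Dta$ to the $\phi_\tau(t-\tau)u_0$ term recovers $-A$-type behaviour, and part~$iii)$ gives $\frac{\de}{\de\tau}\phi_z(\tau)=-A(z)\psi_z(\tau)$. I would compute $\Dta$ of $\phi_\tau(t-\tau)u_0$ and pick up the defect $\tilde Q(t,0)u_0=-(A(t)-A(0))\phi_0(t-0)u_0$; the role of $U(t,\tau)=\int_\tau^t\psi_s(t-s)Q(s,\tau)\,\de s$ is to feed this defect back in, and the first Volterra equation in \eqref{volterraQR}, namely $\tilde Q+\int\tilde R\,Q=Q$, is what forces the cancellation. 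The regularity hypothesis $u_0\in\X^\nu$ enters here through the third estimate in Lemma~3 and through Lemma~5, which bound $Q(t,\tau)u_0$ and control its time-increments so that the Caputo derivative and the integrals converge. The bound $\|P_\alpha(t,\tau)\|_{\Lm(X)}\le C(t-\tau)^{\alpha-1}$ from Lemma~6 guarantees the defining integrals are absolutely convergent near the diagonal.

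Next I would handle the inhomogeneous part $u_f(t):=\int_0^t P_\alpha(t,\tau)f(\tau)\,\de\tau$ and show $\Dta u_f = A(t)u_f + f$. Here the analogous roles are played by the second relation in Lemma~1, the kernel $R$, and the second Volterra equation $\tilde R+\int\tilde R\,R=R$; differentiating the convolution-type integral produces both the forcing term $f(t)$ (from the singular behaviour of $\psi_\tau$ as $\tau\to t$, using $\int_0^\infty\Phi_\alpha=1$) and an $A(t)u_f$ term, with the $R$-correction cancelling the commutator defect $\tilde R$. The hypothesis $f\in C^k([0,T];X)$ supplies the H\"older continuity needed to differentiate under the integral and to give meaning to $\Dta u_f\in C((0,T];X)$; the H\"older increment estimates for $R$ in Lemma~5 are the quantitative input. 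Checking the initial condition $u(0)=u_0$ is comparatively soft: $\phi_0(0)=\int_0^\infty\Phi_\alpha(z)T_0(0)\,\de z = I$ by $\int_0^\infty\Phi_\alpha=1$, while $U(t,0)\to 0$ and $u_f(t)\to 0$ as $t\to 0^+$ by the integrability bounds.

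For uniqueness under $u_0\in\X$, the plan is the standard fractional duality/energy argument: given two classical solutions, their difference $w$ solves the homogeneous problem with $w(0)=0$ and $w(t)\in\X$ for all $t$, so one may apply the variation-of-parameters identity against the same evolution family and conclude $w\equiv 0$, the stronger membership $u_0\in\X$ (rather than merely $\X^\nu$) being what licenses the integration-by-parts in the Caputo convolution without boundary defects. I expect the \textbf{main obstacle} to be the rigorous justification of interchanging $\Dta$ (a nonlocal operator defined via $\frac{\de}{\de t}(g_{1-\alpha}\ast\,\cdot\,)$) with the Volterra integrals defining $U$ and $V$: because the kernels $Q,R,\psi$ are all singular on the diagonal $\tau=t$, one cannot naively differentiate under the integral sign, and the cancellation of defects must be organized so that each individually divergent piece is paired with its Volterra counterpart before the limit is taken. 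Managing these diagonal singularities — using the sharp exponents $\bar\omega-1$, $\theta-1$, $\alpha-1$ from Lemmas~4--6 and \eqref{propgamma1} to keep every intermediate integral absolutely convergent — is the delicate heart of the proof; the remaining algebra is routine once the interchange is justified.
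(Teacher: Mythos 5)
First, a point of comparison you could not have known: the paper does not prove Theorem \ref{exun lineare} at all. It is imported from Theorems 1 and 2 of \cite{hezhouFCAA}, and Section \ref{sec2} merely assembles, again by citation, the ingredients of that proof: the operators $\phi_\tau,\psi_\tau$, the Volterra kernels $Q,R$ solving \eqref{volterraQR}, and the solution operators \eqref{defSP}. Your sketch is therefore being compared to an external proof, and on that comparison it does reconstruct the right architecture: the parametrix (Levi) method, in which $\phi_\tau(t-\tau)$ and $\psi_\tau(t-\tau)$ solve the frozen-coefficient autonomous problem, the commutator defects $\tilde Q(t,\tau)=-(A(t)-A(\tau))\phi_\tau(t-\tau)$ and $\tilde R(t,\tau)=-(A(t)-A(\tau))\psi_\tau(t-\tau)$ measure the failure of the freezing, and the Volterra equations \eqref{volterraQR} are exactly the condition that the corrections $U,V$ of \eqref{defUV} cancel those defects. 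Your identification of where $u_0\in\X^\nu$ and $f\in C^k$ enter (H\"older increments of $Q(\cdot,\tau)u_0$ and of $R$, needed to make the singular integrals and the Caputo derivative converge) matches the role these hypotheses play in the cited source.

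That said, the proposal is a roadmap rather than a proof, and the two places where you defer are precisely where the cited proof does its real work. (i) You correctly flag the interchange of $\Dta=\frac{\de}{\de t}(g_{1-\alpha}\ast\cdot)$ with the diagonal-singular integrals defining $U$ and $V$, but you do not carry out the pairing of divergent pieces; this requires establishing the $\Dta$-identities for $\phi$ and $\psi$ on a regularized interval $[\tau,t-\varepsilon]$ and passing to the limit via the estimates of Lemmas 2--5, and without writing this out the claimed cancellation is asserted, not verified. (ii) The uniqueness step as you describe it (a ``fractional duality/energy argument'') does not apply here: at this point of the paper $X$ is a general Banach space with no inner product, so there is no energy identity to run. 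The argument that actually works shows that \emph{any} classical solution with $u_0\in\X$ necessarily satisfies the integral representation \eqref{soluzioneclassica}, via a variation-of-constants identity obtained by applying the constructed kernels to the solution and exploiting $u(t)\in\X=D(A(t))$; uniqueness is then immediate. Neither gap is a wrong turn, but both would have to be filled before this counts as a proof.
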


\section{Ultracontractivity property}\label{sec3}
\setcounter{equation}{0}

In this section we investigate the ultracontractivity of the solution operators associated to problem $(P)$ in the Lebesgue framework.

From now on, let $\Omega$ be a relatively compact Hausdorff space and $\sigma$ be a Radon measure supported on $\overline\Omega$. We denote by $L^p(\Omega)\equiv L^p(\Omega,\sigma)$ the Banach space of Lebesgue integrable functions, endowed with the norm
\begin{equation}\notag
\|u\|_{L^p(\Omega)}:=\left(\int_\Omega |u|^p\,\de\sigma\right)^\frac{1}{p}.
\end{equation}
In the case $p=2$, we denote by $(\cdot,\cdot)_2$ the scalar product on $L^2(\Omega)$.

In this and in the following section, we set $X=L^2(\Omega)$. We set $\Xi:=\{(t,\tau)\in(0,T)^2\,:\,\tau<t\}$.

\begin{theo}\label{ultracontr} Let us assume that for every $t\in[0,T]$ the semigroup $T_t(\tau)$ generated by $-A(t)$ is ultracontractive, in the sense that:
\begin{itemize}
	\item[i)] for every $1\leq p\leq q\leq\infty$, $T_t(\tau)$ maps $L^p(\Omega)$ into $L^q(\Omega)$;
	\item[ii)] there exist two constants $C>0$ and $\lambda_A>0$ independent of $p$ and $q$ such that, for every $t>0$,
	\begin{equation}\label{hp ultracontr}
	\|T_t(\tau)\|_{\Lm(L^p(\Omega),L^q(\Omega))}\leq C\tau^{-\lambda_A\left(\frac{1}{p}-\frac{1}{q}\right)}.
	\end{equation}
\end{itemize}
Then, for every $(t,\tau)\in\Xi$, the following properties hold:
\begin{itemize}
	\item[a)] if $\lambda_A\left(\frac{1}{p}-\frac{1}{q}\right)<1$, then $S_\alpha(t,\tau)$ is ultracontractive in the sense that there exists a constant $C_S>0$ independent of $t$ and $\tau$ such that
	\begin{equation}\label{stima ultracontrS}
	\|S_\alpha(t,\tau)\|_{\Lm(L^p(\Omega),L^q(\Omega))}\leq C_S (t-\tau)^{-\alpha\lambda_A\left(\frac{1}{p}-\frac{1}{q}\right)};
\end{equation}
	\item[b)] if $\lambda_A\left(\frac{1}{p}-\frac{1}{q}\right)<2$, then $P_\alpha(t,\tau)$ is ultracontractive in the sense that there exists a constant $C_P>0$ independent of $t$ and $\tau$ such that
	\begin{equation}\label{stima ultracontrP}
	\|(t-\tau)^{1-\alpha}P_\alpha(t,\tau)\|_{\Lm(L^p(\Omega),L^q(\Omega))}\leq C_P(t-\tau)^{-\alpha\lambda_A\left(\frac{1}{p}-\frac{1}{q}\right)}.
\end{equation}
\end{itemize}
\end{theo}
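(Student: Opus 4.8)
\noindent The plan is to split each operator in \eqref{defSP} into its \emph{frozen} leading block, $\phi_\tau(t-\tau)$ or $\psi_\tau(t-\tau)$, and the non-autonomous correction $U(t,\tau)$ or $V(t,\tau)$ from \eqref{defUV}. I expect the leading blocks to carry the full ultracontractivity and the corrections to be strictly lower order. Write $\kappa:=\lambda_A\big(\frac1p-\frac1q\big)$. For the leading blocks I would bring the $\Lm(L^p(\Omega),L^q(\Omega))$-norm under the integral sign in \eqref{def phi e psi} and apply \eqref{hp ultracontr} to $T_\tau\big((t-\tau)^\alpha z\big)$; this factors out $(t-\tau)^{-\alpha\kappa}$ and leaves the Wright moments
\[
\int_0^\infty z^{-\kappa}\Phi_\alpha(z)\,\de z,\qquad \int_0^\infty z^{1-\kappa}\Phi_\alpha(z)\,\de z,
\]
which by \eqref{propgamma2} equal $\Gamma(1-\kappa)/\Gamma(1-\alpha\kappa)$ and $\Gamma(2-\kappa)/\Gamma\big(1+\alpha(1-\kappa)\big)$ and are finite \emph{exactly} when $\kappa<1$ and $\kappa<2$. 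This is precisely where the two hypotheses in a) and b) come from, and it gives $\|\phi_\tau(t-\tau)\|_{\Lm(L^p,L^q)}\le C(t-\tau)^{-\alpha\kappa}$ together with $\|\psi_\tau(t-\tau)\|_{\Lm(L^p,L^q)}\le C(t-\tau)^{\alpha-1-\alpha\kappa}$, the latter being \eqref{stima ultracontrP} after the weight $(t-\tau)^{1-\alpha}$.

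\medskip
\noindent For the corrections I would factor each integrand of \eqref{defUV} through the pivot $L^2(\Omega)=X$ (the finite-measure embeddings $L^a\hookrightarrow L^b$ for $a\ge b$ dispose of the range $p\ge2$). Since the leading estimate applied with exponent $2$ gives $\|\psi_s(t-s)\|_{\Lm(L^2,L^q)}\le C(t-s)^{\alpha-1-\alpha\lambda_A(1/2-1/q)}$, it remains to control $Q(s,\tau)$ and $R(s,\tau)$ as maps $L^p\to L^2$. I would get these by re-reading the Volterra construction \eqref{volterraQR}: the kernel $\tilde R$ is bounded on $X$ by \eqref{stimaQR}, so every Neumann iterate keeps its target in $L^2$, and the $L^p\to L^2$ smoothing enters only through the inhomogeneous blocks $\tilde Q$ and $\tilde R$. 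Granting $L^p\to L^2$ bounds $C(s-\tau)^{b}$ for those, the time integrals $\int_\tau^t(t-s)^{\alpha-1-\alpha\lambda_A(1/2-1/q)}(s-\tau)^{b}\,\de s$ are evaluated by the Beta identity \eqref{propgamma1}; I expect the resulting power to beat $-\alpha\kappa$ by a margin of order $\theta>0$, so that $U,V$ are dominated by the leading blocks as $t\to\tau$ and stay bounded for $t-\tau\le T$, closing \eqref{stima ultracontrS} and \eqref{stima ultracontrP}.

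\medskip
\noindent The step I expect to be the main obstacle is therefore the $L^p\to L^2$ estimate of the blocks $\tilde Q,\tilde R$ for $p<2$. The block $\tilde R(s,\tau)=-(A(s)-A(\tau))\psi_\tau(s-\tau)$ is benign: writing $A(s)-A(\tau)=(A(s)-A(\tau))A(\tau)^{-1}A(\tau)$, using the H\"older bound \eqref{AT3} on $L^2$, and splitting the semigroup (into finitely many factors if needed) so that one factor absorbs $A(\tau)$ through \eqref{bdd prop} while the others gain integrability through \eqref{hp ultracontr}, I am left with Wright moments carrying the \emph{extra} weight $z$ coming from $\psi$, which keeps them convergent. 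For $\tilde Q(s,\tau)=-(A(s)-A(\tau))\phi_\tau(s-\tau)$ this breaks down: the missing weight $z$ turns the moment into the divergent $\int_0^\infty z^{-1-\lambda_A(1/p-1/2)}\Phi_\alpha(z)\,\de z$, and no amount of semigroup splitting removes the $z^{-1}$ produced by $A$. This reflects the fact that $A(\tau)\phi_\tau$ is only borderline bounded even on $L^2$.

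\medskip
\noindent Consequently, on the $S_\alpha$-side I would abandon the triangle inequality and exploit the cancellation already behind the $\Lm(X)$-bound \eqref{stimaQR}. Concretely, using the subordination identity $\phi_\tau=g_{1-\alpha}\ast\psi_\tau$ and the rule $\frac{\de}{\de\tau}\phi_z(\tau)=-A(z)\psi_z(\tau)$, I would rewrite the $\tilde Q$-block so that $A$ is never estimated in isolation but always kept paired with the H\"older increment $A(s)-A(\tau)$; only this pairing restores an integrable singularity and lets the $L^p\to L^2$ bound go through modulo the integrable kernel $g_{1-\alpha}$. Making this quantitative in the $L^p\to L^2$ topology, uniformly for $(t,\tau)\in\Xi$, is the delicate point on which case a) ultimately rests; case b), driven by the $\psi$-type block $\tilde R$, should follow by the more direct route above.
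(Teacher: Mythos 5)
Your treatment of the leading blocks $\phi_\tau(t-\tau)$ and $\psi_\tau(t-\tau)$ coincides with the paper's argument and is correct, including the identification of the thresholds $\kappa:=\lambda_A\left(\frac1p-\frac1q\right)<1$ and $\kappa<2$ with the convergence of the Wright moments via \eqref{propgamma2}. The genuine gap is in the correction terms $U(t,\tau)$ and $V(t,\tau)$ of \eqref{defUV}: your argument for part a) is not complete, and you say so yourself --- the ``cancellation'' that is supposed to rescue the $\tilde Q$-block as a map $L^p\to L^2$ is left as a programme (``making this quantitative \dots is the delicate point''), not a proof. As written, the proposal does not establish \eqref{stima ultracontrS}.

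The obstacle is largely self-inflicted by your factorization. By pivoting through $L^2$ you estimate $\psi_s(t-s)\colon L^2\to L^q$ and then demand that $Q(s,\tau)$ \emph{gain} integrability from $L^p$ to $L^2$; it is exactly this demand that produces the divergent moment $\int_0^\infty z^{-1-\lambda_A(1/p-1/2)}\Phi_\alpha(z)\,\de z$. The paper instead places the entire $L^p\to L^q$ gain on the semigroup factor $T_s((t-s)^\alpha z)$ inside $\psi_s(t-s)$ via \eqref{hp ultracontr}: this costs a factor $z^{-\kappa}(t-s)^{-\alpha\kappa}$, which remains integrable against $z\Phi_\alpha(z)\,\de z$ by \eqref{propgamma2} and, after multiplying by the bounds \eqref{stimaQR}, against $(s-\tau)^{\bar\omega-1}\,\de s$ (resp. $(s-\tau)^{\theta-1}\,\de s$) by the Beta identity \eqref{propgamma1}. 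With this distribution of the smoothing, $Q(s,\tau)$ and $R(s,\tau)$ only need to be \emph{bounded} $L^p\to L^p$ with the singular-in-time bounds \eqref{stimaQR}; no smoothing from $Q$ or $R$ is required, the $z^{-1}$ singularity you fight never appears, and the subordination/cancellation machinery is unnecessary. To be fair, you have put your finger on something the paper passes over in silence: \eqref{stimaQR} is stated only in $\Lm(X)=\Lm(L^2)$, so its use on $f\in L^p$, $p\neq2$, tacitly presupposes that the Volterra construction of $Q$ and $R$ (hence the Acquistapace--Terreni estimates) can be run in $L^p$. But the cure for that is to establish the $\Lm(L^p)$ version of \eqref{stimaQR}, not to ask $Q$ to map $L^p$ into $L^2$.
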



\begin{proof}
We adapt to our setting the proof of \cite[Proposition 2.2.2]{GWbook}. Let $(t,\tau)\in\Xi$, $1\leq p\leq q\leq\infty$ and $f\in L^p(\Omega)$.

We begin by proving point $a)$. Here we assume $\lambda_A\left(\frac{1}{p}-\frac{1}{q}\right)<1$.

From \eqref{defSP}, \eqref{def phi e psi} and \eqref{defUV}  we have that
\begin{equation}\notag
\begin{split}
&\|S_\alpha(t,\tau)f\|_{L^q(\Omega)}\leq\|\phi_\tau(t-\tau)f\|_{L^q(\Omega)}+\|U(t,\tau)f\|_{L^q(\Omega)}\\
&\leq\int_0^\infty\Phi_\alpha(s)\|T_\tau(s(t-\tau)^\alpha)f\|_{L^q(\Omega)}\,\de s+\int_\tau^t\|\psi_s(t-s)Q(s,\tau)f\|_{L^q(\Omega)}\,\de s=:A_1+B_1.
\end{split}
\end{equation}
We first estimate $A_1$. From \eqref{hp ultracontr} and \eqref{propgamma2} it holds
\begin{equation}\notag
\begin{split}
A_1&=\int_0^\infty\Phi_\alpha(s)\|T_\tau(s(t-\tau)^\alpha)f\|_{L^q(\Omega)}\,\de s\\[2mm]
&\leq C(t-\tau)^{-\alpha\lambda_A\left(\frac{1}{p}-\frac{1}{q}\right)}\|f\|_{L^p(\Omega)}\int_0^\infty\Phi_\alpha(s)s^{-\lambda_A\left(\frac{1}{p}-\frac{1}{q}\right)}\,\de s\\
&=C(t-\tau)^{-\alpha\lambda_A\left(\frac{1}{p}-\frac{1}{q}\right)}\|f\|_{L^p(\Omega)}\frac{\Gamma\left(1-\lambda_A\left(\frac{1}{p}-\frac{1}{q}\right)\right)}{\Gamma\left(1-\alpha\lambda_A\left(\frac{1}{p}-\frac{1}{q}\right)\right)}\\
&\equiv C(\Gamma)(t-\tau)^{-\alpha\lambda_A\left(\frac{1}{p}-\frac{1}{q}\right)}\|f\|_{L^p(\Omega)},
\end{split}
\end{equation}
where $C(\Gamma)$ is a suitable positive constant dependent on the Euler function.

We now estimate the term $B_1$. By using \eqref{stimaQR} and again \eqref{def phi e psi}, \eqref{hp ultracontr} and \eqref{propgamma2} we have
\begin{equation}\notag
\begin{split}
B_1&=\int_\tau^t\|\psi_s(t-s)Q(s,\tau)f\|_{L^q(\Omega)}\,\de s\\[2mm]
&\leq C\alpha\int_\tau^t\int_0^\infty \Phi_\alpha(z) z^{1-\lambda_A\left(\frac{1}{p}-\frac{1}{q}\right)} (t-s)^{\alpha-1-\alpha\lambda_A\left(\frac{1}{p}-\frac{1}{q}\right)}\|Q(s,\tau)f\|_{L^q(\Omega)}\,\de z\,\de s\\
&\leq C\alpha\|f\|_{L^p(\Omega)}\int_\tau^t\int_0^\infty \Phi_\alpha(z) z^{1-\lambda_A\left(\frac{1}{p}-\frac{1}{q}\right)} (t-s)^{\alpha-1-\alpha\lambda_A\left(\frac{1}{p}-\frac{1}{q}\right)}(s-\tau)^{\bar\omega-1}\,\de z\,\de s\\
&=C(\Gamma)C\alpha\|f\|_{L^p(\Omega)}\int_\tau^t (t-s)^{\alpha-1-\alpha\lambda_A\left(\frac{1}{p}-\frac{1}{q}\right)}(s-\tau)^{\bar\omega-1}\,\de s,
\end{split}
\end{equation}
where $C(\Gamma)$ is a suitable positive constant dependent on the Euler function which can be explicitly calculated using \eqref{propgamma2}. Using now \eqref{propgamma1}, we obtain that there exists another suitable positive constant, which we denote again by $C(\Gamma)$ and can be explicitly computed, such that
\begin{equation}\notag
B_1\leq C(\Gamma)C\alpha\|f\|_{L^p(\Omega)}(t-\tau)^{\alpha-\alpha\lambda_A\left(\frac{1}{p}-\frac{1}{q}\right)+\bar\omega-1}\equiv C\alpha\|f\|_{L^p(\Omega)}(t-\tau)^{\theta-\alpha\lambda_A\left(\frac{1}{p}-\frac{1}{q}\right)}.
\end{equation}
Putting together the estimates for $A_1$ and $B_1$, we obtain that there exists a suitable positive constant, which we denote by $C_S$, such that, recalling that $(t,\tau)\in\Xi$,
\begin{equation}\notag
\begin{split}
\|S_\alpha(t,\tau)f\|_{L^q(\Omega)}&\leq C(t-\tau)^{-\alpha\lambda_A\left(\frac{1}{p}-\frac{1}{q}\right)}\|f\|_{L^p(\Omega)}\left[1+\alpha(t-\tau)^\theta\right]\\
&\equiv C_S(t-\tau)^{-\alpha\lambda_A\left(\frac{1}{p}-\frac{1}{q}\right)}\|f\|_{L^p(\Omega)},
\end{split}
\end{equation}
hence \eqref{stima ultracontrS} is proved.

We now prove point $b)$. Here we assume $\lambda_A\left(\frac{1}{p}-\frac{1}{q}\right)<2$.

From \eqref{defSP}, \eqref{def phi e psi} and \eqref{defUV} we have that
\begin{equation}\notag
\begin{split}
&\|(t-\tau)^{1-\alpha}P_\alpha(t,\tau)f\|_{L^q(\Omega)}\leq\|(t-\tau)^{1-\alpha}\psi_\tau(t-\tau)f\|_{L^q(\Omega)}+\|(t-\tau)^{1-\alpha}V(t,\tau)f\|_{L^q(\Omega)}\\
&\leq\alpha\int_0^\infty s\Phi_\alpha(s)\|T_\tau(s(t-\tau)^\alpha)f\|_{L^q(\Omega)}\,\de s+\int_\tau^t(t-\tau)^{1-\alpha}\|\psi_s(t-s)R(s,\tau)f\|_{L^q(\Omega)}\,\de s\\
&=:A_2+B_2.
\end{split}
\end{equation}
As to the term $A_2$, as in the previous step, by using \eqref{hp ultracontr} and \eqref{propgamma2}, we obtain that there exists a suitable constant $C>0$ such that
\begin{equation}\notag
A_2=\alpha\int_0^\infty s\Phi_\alpha(s)\|T_\tau(s(t-\tau)^\alpha)f\|_{L^q(\Omega)}\,\de s\leq C(t-\tau)^{-\alpha\lambda_A\left(\frac{1}{p}-\frac{1}{q}\right)}\|f\|_{L^p(\Omega)}.
\end{equation}
As to $B_2$, by using \eqref{stimaQR} and again \eqref{def phi e psi}, \eqref{hp ultracontr} and \eqref{propgamma2} we have
\begin{equation}\notag
\begin{split}
B_2&=\int_\tau^t(t-\tau)^{1-\alpha}\|\psi_s(t-s)R(s,\tau)f\|_{L^q(\Omega)}\,\de s\\
&\leq C\alpha(t-\tau)^{1-\alpha}\int_\tau^t\int_0^\infty \Phi_\alpha(z) z^{1-\lambda_A\left(\frac{1}{p}-\frac{1}{q}\right)} (t-s)^{\alpha-1-\alpha\lambda_A\left(\frac{1}{p}-\frac{1}{q}\right)}\|R(s,\tau)f\|_{L^q(\Omega)}\,\de z\,\de s\\
&\leq C\alpha(t-\tau)^{1-\alpha}\|f\|_{L^p(\Omega)}\int_\tau^t\int_0^\infty \Phi_\alpha(z) z^{1-\lambda_A\left(\frac{1}{p}-\frac{1}{q}\right)} (t-s)^{\alpha-1-\alpha\lambda_A\left(\frac{1}{p}-\frac{1}{q}\right)}(s-\tau)^{\theta-1}\,\de z\,\de s\\
&=C(\Gamma)C\alpha(t-\tau)^{1-\alpha}\|f\|_{L^p(\Omega)}\int_\tau^t (t-s)^{\alpha-1-\alpha\lambda_A\left(\frac{1}{p}-\frac{1}{q}\right)}(s-\tau)^{\theta-1}\,\de s,
\end{split}
\end{equation}
where as before $C(\Gamma)$ is a suitable positive constant dependent on the Euler function which can be explicitly computed using \eqref{propgamma2}. After using \eqref{propgamma1} we obtain
\begin{equation}\notag
B_2\leq C(\Gamma)C\alpha(t-\tau)^{1-\alpha}\|f\|_{L^p(\Omega)}(t-\tau)^{\alpha-\alpha\lambda_A\left(\frac{1}{p}-\frac{1}{q}\right)+\theta-1}\equiv C\alpha\|f\|_{L^p(\Omega)}(t-\tau)^{\theta-\alpha\lambda_A\left(\frac{1}{p}-\frac{1}{q}\right)}.
\end{equation}
Putting together these estimates, as before we have that there exists a suitable constant $C_P>0$ such that, for $(t,\tau)\in\Xi$,
\begin{equation}\notag
\|(t-\tau)^{1-\alpha}P_\alpha(t,\tau)f\|_{L^q(\Omega)}\leq C_P(t-\tau)^{-\alpha\lambda_A\left(\frac{1}{p}-\frac{1}{q}\right)}\|f\|_{L^p(\Omega)},
\end{equation}
hence \eqref{stima ultracontrP} is proved.

\end{proof}

\section{The semilinear problem: local and global existence}\label{sec4}
\setcounter{equation}{0}

In this section we investigate the semilinear problem
\begin{equation}\notag
	(P_S)
	\begin{cases}
	\Dta u(t)-A(t)u(t)=J(u(t)) \quad &\text{ for every }t \in (0,T),	\\
		u(0) =u_0,			
		\end{cases}
\end{equation}
under suitable assumptions on the semilinear term $J$. We provide both local and global existence results.

We recall that also in this section $X=L^2(\Omega)$, where $\Omega$ is as in Section \ref{sec3}. From now on we take $u_0\in L^2(\Omega)$.

\subsection{Local existence}

We introduce a suitable notion of solution to problem $(P_S)$.

\begin{definition}[see Definition 6.1 in \cite{hezhouBull}] A function $u\in C((0,T];X)$ is a \emph{mild solution} of problem $(P_S)$ if it satisfies
\begin{equation}\label{soluzionemild}
u(t)=S_{\alpha}(t,0)u_0+\int_0^t P_{\alpha}(t,\tau)J(u(\tau))\,\de\tau,\quad t\in(0,T].
\end{equation}
\end{definition}

We assume that for every $t\in (0,T)$ $J$ is a mapping from $L^{2p}(\Omega)$ to $L^2(\Omega)$ for $p>1$ which is locally Lipschitz, i.e., it is Lipschitz on bounded sets in $L^{2p}(\Omega)$:
\begin{equation}\label{LIPJ}
\|J(u)-J(v)\|_{L^2(\Omega)}\leq\sl{l(r)}\|u-v\|_{L^{2p}(\Omega)}
\end{equation}
whenever $\|u\|_{L^{2p}(\Omega)}\leq r,\|v\|_{L^{2p}(\Omega)}\leq r$, where $\sl{l(r)}$ denotes the Lipschitz constant of $J$.
We also assume that $J(0)=0$. This assumption is not necessary in all that follows, but it simplifies the calculations (see \cite{weiss1}).

In order to prove the local existence theorem, we make the following assumption on the growth of $\sl{l(r)}$ when $r\to+\infty$,
\begin{equation}\label{crescita l}
\mbox{Let}\; a:=1-\alpha+\frac{\alpha\lambda_A}{2}\left(1-\frac{1}{p}\right); \quad \mbox{there exists}\,\,0<b<a+\alpha-1\,:\,\sl{l(r)}=
{\mathcal{O}}(r^\frac{1-a}{b}),
\end{equation}
where $\lambda_A$ is the ultracontractivity exponent of $T_t(\tau)$ given in \eqref{hp ultracontr}. We note that $0<a<1$ for $\lambda_A<2$ and $p>1$. 

Let $p>1$. Following the approach in Theorem 2 in \cite{weiss1} and adapting the proof of Theorem 5.1 in \cite{La-Ve3}, we have the following result.

\begin{theorem}\label{theoesloc}
Let condition \eqref{crescita l} hold. Let $\kappa>0$ be sufficiently small, $u_0\in L^2(\Omega)$ and
\begin{equation}\label{cnidato}
\limsup_{t\to 0^+}\|t^b S_\alpha(t,0)u_0\|_{L^{2p}(\Omega)}<\kappa.
\end{equation}
Then there exists a $\overline{T}>0$ and a unique mild solution 
\begin{equation}\notag
u\in C([0,\overline{T}];L^2(\Omega))\cap C((0,\overline{T}];L^{2p}(\Omega)), 
\end{equation}
with $u(0)=u_0$ and $\|t^b u(t)\|_{L^{2p}(\Omega)}<2\kappa$, satisfying, for every $t\in [0,\overline{T}]$,
\begin{equation}\label{rappint1}
u(t)= S_\alpha(t,0) u_0 +\int_0^t P_\alpha(t,\tau) J(u(\tau))\,\de\tau,
\end{equation}
with the integral being both an $L^2$-valued and an $L^{2p}$-valued Bochner integral.
\end{theorem}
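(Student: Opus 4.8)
The plan is to realize the solution as the unique fixed point of the map
\[
(\mathcal{N}u)(t):=S_\alpha(t,0)u_0+\int_0^t P_\alpha(t,\tau)J(u(\tau))\,\de\tau,
\]
the whole argument being driven by the ultracontractivity estimates \eqref{stima ultracontrS}--\eqref{stima ultracontrP}. For $\overline T>0$ to be fixed later I would work in the closed ball
\[
\mathcal{B}:=\Big\{u\in C((0,\overline T];L^{2p}(\Omega)):\ \|u\|_*:=\sup_{0<t\le\overline T}t^b\|u(t)\|_{L^{2p}(\Omega)}\le 2\kappa\Big\},
\]
which is complete for the metric induced by $\|\cdot\|_*$. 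The exponents are calibrated by the elementary identity obtained by taking $(p,q)=(2,2p)$ in the theorem above: $\alpha\lambda_A(\tfrac12-\tfrac1{2p})=a-(1-\alpha)$, so that \eqref{stima ultracontrS}--\eqref{stima ultracontrP} give $\|S_\alpha(t,\tau)\|_{\Lm(L^2(\Omega),L^{2p}(\Omega))}\le C(t-\tau)^{-(a-(1-\alpha))}$ and $\|P_\alpha(t,\tau)\|_{\Lm(L^2(\Omega),L^{2p}(\Omega))}\le C(t-\tau)^{-a}$; this is exactly where the admissibility $a<1$ enters.

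First I would show $\mathcal N(\mathcal B)\subseteq\mathcal B$. The linear part is controlled directly by \eqref{cnidato}: since $\limsup_{t\to0^+}t^b\|S_\alpha(t,0)u_0\|_{L^{2p}(\Omega)}<\kappa$, for $\overline T$ small the supremum of $t^b\|S_\alpha(t,0)u_0\|_{L^{2p}(\Omega)}$ over $(0,\overline T]$ stays below $\kappa$. For the nonlinear part I use $J(0)=0$ and \eqref{LIPJ} to get $\|J(u(\tau))\|_{L^2(\Omega)}\le \sl{l}(\|u(\tau)\|_{L^{2p}(\Omega)})\,\|u(\tau)\|_{L^{2p}(\Omega)}$; inserting the bound $\|u(\tau)\|_{L^{2p}(\Omega)}\le 2\kappa\,\tau^{-b}$ valid on $\mathcal B$ and the growth rate \eqref{crescita l}, $\sl{l}(r)=\mathcal O(r^{(1-a)/b})$, yields $\|J(u(\tau))\|_{L^2(\Omega)}\le C(2\kappa)^{1+\frac{1-a}{b}}\tau^{a-1-b}$. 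Combining with $\|P_\alpha(t,\tau)\|_{\Lm(L^2,L^{2p})}\le C(t-\tau)^{-a}$ and evaluating by \eqref{propgamma1}
\[
\int_0^t(t-\tau)^{-a}\tau^{a-1-b}\,\de\tau=\frac{\Gamma(1-a)\Gamma(a-b)}{\Gamma(1-b)}\,t^{-b},
\]
which converges precisely because $0<b<a<1$, I obtain $t^b\int_0^t\|P_\alpha(t,\tau)J(u(\tau))\|_{L^{2p}(\Omega)}\,\de\tau\le C(2\kappa)^{1+\frac{1-a}{b}}$. As $1+\frac{1-a}{b}>1$, this is $o(\kappa)$, so for $\kappa$ small the two contributions sum to strictly less than $2\kappa$. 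The contraction estimate is entirely parallel: for $u,v\in\mathcal B$ one has $\|J(u(\tau))-J(v(\tau))\|_{L^2(\Omega)}\le \sl{l}(2\kappa\tau^{-b})\|u(\tau)-v(\tau)\|_{L^{2p}(\Omega)}\le C(2\kappa)^{(1-a)/b}\,\tau^{a-1-b}\|u-v\|_*$, and the same Beta integral gives $\|\mathcal N u-\mathcal N v\|_*\le C(2\kappa)^{(1-a)/b}\|u-v\|_*$. Since $(1-a)/b>0$ the factor is $<1$ once $\kappa$ is small, and the Banach fixed point theorem produces a unique $u\in\mathcal B$ with $\|t^bu(t)\|_{L^{2p}(\Omega)}<2\kappa$ satisfying \eqref{rappint1}.

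It then remains to upgrade this fixed point to the asserted regularity. The $L^{2p}$-valued Bochner integrability is already contained in the estimates above; the $L^2$-valued one follows identically from the base-space bound $\|P_\alpha(t,\tau)\|_{\Lm(L^2(\Omega))}\le C(t-\tau)^{\alpha-1}$ recalled earlier, together with $\|J(u(\tau))\|_{L^2(\Omega)}\le C\tau^{a-1-b}$ and \eqref{propgamma1}. For the claim $u\in C([0,\overline T];L^2(\Omega))$ with $u(0)=u_0$ I would write $u(t)-u_0=(S_\alpha(t,0)u_0-u_0)+\int_0^tP_\alpha(t,\tau)J(u(\tau))\,\de\tau$: the first summand tends to $0$ in $L^2(\Omega)$ by the strong continuity of $S_\alpha(\cdot,0)$ at the diagonal, while continuity of $u$ at interior times $t>0$ in both $L^2(\Omega)$ and $L^{2p}(\Omega)$ follows from the strong continuity of $S_\alpha$ and $P_\alpha$ together with the integrable dominating kernels just exhibited.

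\emph{I expect the vanishing of the nonlinear integral in $L^2(\Omega)$ as $t\to0^+$ to be the main obstacle.} The $\Lm(L^2)$ power counting is borderline: the resulting exponent is $\alpha+a-b-1=\tfrac{\alpha\lambda_A}{2}(1-\tfrac1p)-b$, which is strictly positive only for $b$ below that threshold, whereas the admissible range allows $b$ up to $a$. To cover the whole range I would not rely on a bare power estimate but argue continuity by splitting $\int_0^t$ near $\tau=t$ and invoking the absolute continuity of the Bochner integral and dominated convergence on the dominating kernel $(t-\tau)^{\alpha-1}\tau^{a-1-b}$, which is integrable since $\alpha>0$ and $a-b>0$. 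Everything else is a matter of bookkeeping with the Beta-function identity \eqref{propgamma1} and the two ultracontractivity bounds.
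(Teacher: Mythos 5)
Your overall strategy is the same as the paper's: a Weissler-type contraction argument in a weighted space of continuous functions, driven entirely by the ultracontractivity bounds \eqref{stima ultracontrS}--\eqref{stima ultracontrP} applied with the pair $(2,2p)$, the growth condition \eqref{crescita l} on $l(r)$, and the Beta identity \eqref{propgamma1}. Your exponent calibration $\alpha\lambda_A(\tfrac12-\tfrac1{2p})=a-(1-\alpha)$, the self-map bound producing the factor $(2\kappa)^{1+\frac{1-a}{b}}$, and the contraction factor $(2\kappa)^{\frac{1-a}{b}}$ are all correct, and are in fact more explicit than the paper's proof, which defers these computations to \cite{La-Ve3} and \cite{weiss1,weiss3}. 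The only structural difference is that you contract in the ball $\mathcal B$ carrying just the weighted $L^{2p}$ norm and try to upgrade to $C([0,\overline T];L^2(\Omega))$ afterwards, whereas the paper works from the start in the space $Y$ of \eqref{spazioY} with the metric given by the maximum of the $C([0,\overline{T}];L^2(\Omega))$ norm and the weighted $L^{2p}$ norm; this difference is immaterial for existence and uniqueness in the stated class. (A cosmetic point: \eqref{crescita l} controls $l(r)$ only as $r\to+\infty$, so on $\mathcal B$ you should use $l(r)\le\Lambda\max(1,r)^{(1-a)/b}$, as the paper does in the global-existence proof.)

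The step you flag as the main obstacle is, however, a genuine gap, and the remedy you propose does not close it. The issue is exactly the one you computed: with only $\|P_\alpha(t,\tau)\|_{\Lm(L^2(\Omega))}\le C(t-\tau)^{\alpha-1}$ and $\|J(u(\tau))\|_{L^2(\Omega)}\le C\tau^{a-1-b}$, the identity \eqref{propgamma1} gives
\begin{equation}\notag
\Bigl\|\int_0^t P_\alpha(t,\tau)J(u(\tau))\,\de\tau\Bigr\|_{L^2(\Omega)}\le C\,t^{\alpha+a-1-b},\qquad \alpha+a-1-b=\tfrac{\alpha\lambda_A}{2}\bigl(1-\tfrac1p\bigr)-b.
\end{equation}
When $b>a-(1-\alpha)$ this exponent is negative, and then dominated convergence cannot help: the dominating kernel $(t-\tau)^{\alpha-1}\tau^{a-1-b}$ is integrable on $(0,t)$ for each fixed $t$, but its integral over $(0,t)$ \emph{diverges} as $t\to0^+$, so absolute continuity of the Bochner integral gives no decay, and the argument does not even show the Duhamel term is bounded in $L^2(\Omega)$ near $t=0$. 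Note also why a bare citation of Weissler does not cover this: in the autonomous first-order case the Duhamel propagator is uniformly bounded on the base space, so the nonlinear term is controlled by $\int_0^t\tau^{a-1-b}\,\de\tau\sim t^{a-b}\to0$; here the singular factor $(t-\tau)^{\alpha-1}$ of $P_\alpha$ destroys this. To make the statement $u\in C([0,\overline T];L^2(\Omega))$ with $u(0)=u_0$ rigorous for the full range $0<b<a$ you need either the additional restriction $b\le a-(1-\alpha)$ (under which your power count already gives the vanishing, with an extra $\varepsilon$-splitting in the borderline case), or a genuinely sharper estimate on $\|J(u(\tau))\|_{L^2(\Omega)}$ as $\tau\to0^+$ than the one available from membership in $\mathcal B$; neither is supplied in your proposal.
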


\begin{proof} The proof is based on a contraction mapping argument on suitable spaces of continuous functions with values in Banach spaces. We adapt the proof of Theorem 5.1 in \cite{La-Ve3} to this functional setting, see also Theorem 5.2 in \cite{CLADE}. For the reader's convenience, we sketch it.

We choose $\Lambda>0$ such that $l(r)\leq\Lambda r^{\frac{1-a}{b}}$ for $r\geq 1$ and we fix $u_0\in L^2(\Omega)$ and $0<b<a+\alpha-1$. Let $Y$ be the complete metric space defined by
\begin{equation}\label{spazioY}
\begin{split}
Y=&\left\{u\in C([0,\overline{T}];L^2(\Omega))\cap C((0,\overline{T}];L^{2p}(\Omega))\,:\,u(0)=u_0,\right.\\[2mm]
&\left.\|t^b u(t)\|_{L^{2p}(\Omega)}<2\kappa \mbox{ for every} \;t\in [0,\overline{T}]\right\},
\end{split}
\end{equation}
equipped with the metric $$d(u,v)=\max\left\{\|u-v\|_{C([0,\overline{T}];L^2(\Omega))},\,\sup_{(0,\overline{T}]}t^b \|u(t)-v(t)\|_{L^{2p}(\Omega)}\right\}.$$
For $w\in Y$, let $$\mathcal{F}w(t)=S_\alpha(t,0)u_0 +\int_0^t P_\alpha(t,\tau) J(w(\tau))\,\de\tau.$$ Then obviously $\mathcal{F}w(0)=u_0$ and, by using arguments similar to those used in \cite[proof of Lemma 2.1]{weiss3}, see also Theorem 2 in \cite{weiss1},
we can prove that, for $w\in Y$, $\mathcal{F}w\in C([0,\overline{T}];L^2(\Omega))\cap C((0,\overline{T}];L^{2p}(\Omega))$. We continue by proving that
\begin{equation}\label{tbFu}
\limsup_{t\rightarrow 0^+}\|t^b\mathcal{F}w(t)\|_{L^{2p}(\Omega)}<2\kappa \;\mbox{ for every}\; t\in [0,\overline{T}].
\end{equation}
This amounts to prove that there exists $\overline T>0$ such that $\|t^b \mathcal{F}w(t)\|_{L^{2p}(\Omega)}\leq 2\kappa$ for every $t\in [0,\overline{T}]$. From \eqref{cnidato}, Theorem \ref{ultracontr} and \eqref{crescita l} , we have
\begin{equation}\notag
\begin{split}
\|t^b \mathcal{F}w(t)\|_{L^{2p}(\Omega)}&\leq\kappa + t^b\left\| \int_0^t P_\alpha(t,\tau) J(w(\tau))\,\de\tau\right\|_{L^{2p}(\Omega)}\\
&\leq \kappa+ t^b \int_0^t \|P_\alpha(t,\tau)\|_{\Lm(L^2(\Omega),L^{2p}(\Omega))} \|J(w(\tau))\|_{L^2(\Omega)}\,\de\tau\\[2mm]
&\leq\kappa+C_Pt^b\int_0^t (t-\tau)^{\alpha-1-\alpha\lambda_A\left(\frac{1}{p}-\frac{1}{2p}\right)}\sl{l(\|w(\tau)\|_{L^{2p}(\Omega)})}\|w(\tau)\|_{L^{2p}(\Omega)}\,\de\tau\\
&\leq\kappa+C_Pt^b\Lambda\int_0^t (t-\tau)^{-a}\|w(\tau)\|_{L^{2p}(\Omega)}^\frac{1-a+b}{b}\,\de\tau.
\end{split}
\end{equation}
Now, from the definition of $Y$, it follows that
\begin{equation}\notag
\begin{split}
\|t^b \mathcal{F}w(t)\|_{L^{2p}(\Omega)}&\leq\kappa+C_pt^b\Lambda(2\kappa)^{\frac{1-a+b}{b}} \int_0^t (t-\tau)^{-a} \tau^{a-1-b}\,\de\tau\\
&=\kappa+C_p\Lambda(2\kappa)^{\frac{1-a+b}{b}} \int_0^1 (1-z)^{-a} z^{a-1-b}\,\de z,
\end{split}
\end{equation}
where in the last equality we made a suitable change of variables in the integral. We point out that the integral on the right-hand side of the above inequality is finite. Hence, if we set $B:=\int_0^1 (1-z)^{-a} z^{a-1-b}\,\de z>0$ and we choose $\kappa$ such that $C_P\Lambda B(2\kappa)^{\frac{1-a+b}{b}}\leq\kappa$, i.e. $\kappa\leq (C_P\Lambda B2^{\frac{1-a+b}{b}})^{-\frac{b}{1-a}}$, \eqref{tbFu} is proved.

It remains to prove that, for a suitable choice of $\kappa$, $\mathcal{F}$ is a contraction on $Y$.

By applying Theorem \ref{ultracontr} with $p=q=2$, and using again \eqref{cnidato} and \eqref{crescita l}, we have that
\begin{equation}\notag
\begin{split}
&\|\mathcal{F}u(t)-\mathcal{F}v(t)\|_{L^2(\Omega)}\leq \int_0^t\|P_\alpha(t,\tau)\left[J(u(\tau))-J(v(\tau))\right]\|_{L^2(\Omega)}\,\de\tau\\[2mm]
&\leq C_P\int_0^t(t-\tau)^{\alpha-1}\|J(u(\tau))-J(v(\tau))\|_{L^2(\Omega)}\,\de\tau\\[2mm]
&\leq C_P\Lambda (2\kappa)^{\frac{1-a}{b}}\int_0^t (t-\tau)^{\alpha-1}\tau^{a-1}\|u(\tau)-v(\tau)\|_{L^{2p}(\Omega)}\,\de\tau\\[2mm]
&\leq C_P\Lambda(2\kappa)^{\frac{1-a}{b}} \|u-v\|_Y \int_0^t (t-\tau)^{\alpha-1} \tau^{a-1-b}\,\de\tau\\[2mm]
&= C_P\Lambda(2\kappa)^{\frac{1-a}{b}} \|u-v\|_Y\,t^{\alpha-1+a-b}\int_0^1 (1-z)^{\alpha-1} z^{a-1-b}\,\de z.
\end{split}
\end{equation}
Therefore, we obtain
$$\|\mathcal{F}u-\mathcal{F}v\|_{C([0,\overline{T}],L^2(\Omega))} \leq C_PA\Lambda(2\kappa)^{\frac{1-a}{b}}\,{\overline T}^{\alpha-1+a-b} \|u-v\|_Y,$$
where $A:=\int_0^1 (1-z)^{\alpha-1} z^{a-1-b}\,\de z>0$.

We now consider the norm ${\|t^b(\mathcal{F}u(t)-\mathcal{F}v(t))\|_{L^{2p}(\Omega)}}$. Proceeding similarly as in the proof of \eqref{tbFu}, it holds that
\begin{equation}\notag
\begin{split}
&\|t^b(\mathcal{F}u(t)-\mathcal{F}v(t))\|_{L^{2p}(\Omega)}\leq t^b\int_0^t\|P_\alpha(t,\tau)\left[J(u(\tau))-J(v(\tau))\right]\|_{L^{2p}(\Omega)}\,\de\tau\\[2mm]
&\leq t^b\int_0^t \|P_\alpha(t,\tau)\|_{\Lm(L^2(\Omega),L^{2p}(\Omega))}\|J(u(\tau))-J(v(\tau))\|_{L^2(\Omega)}\,\de\tau\\[2mm]
&\leq C_Pt^b\Lambda(2\kappa)^{\frac{1-a}{b}}\int_0^t (t-\tau)^{\alpha-1-\alpha\lambda_A\left(\frac{1}{p}-\frac{1}{2p}\right)}\tau^{a-1-b}\|\tau^b(u(\tau)-v(\tau))\|_{L^{2p}(\Omega)}\,\de\tau\\[2mm]
&\leq C_Pt^b\Lambda(2\kappa)^{\frac{1-a}{b}}\|u-v\|_{Y}\int_0^t (t-\tau)^{-a}\tau^{a-1-b}\,\de\tau=C_PB\Lambda(2\kappa)^{\frac{1-a}{b}}\|u-v\|_{Y},
\end{split}
\end{equation}
where we recall that $B:=\int_0^1 (1-z)^{-a} z^{a-1-b}\,\de z>0$.

Hence, we have that
$$\sup_{t\in(0,\overline T]}\|t^b(\mathcal{F}u(t)-\mathcal{F}v(t))\|_{L^{2p}(\Omega)}\leq C_PB\Lambda(2\kappa)^{\frac{1-a}{b}}\|u-v\|_{Y},$$
and, in order to prove that it is a contraction, it is sufficient to choose $\kappa$ such that $C_PA\Lambda(2\kappa)^{\frac{1-a}{b}}\,{\overline T}^{\alpha-1+a-b}< 1$ and $C_PB\Lambda(2\kappa)^{\frac{1-a}{b}}<1$.

\end{proof}

\begin{remark}\label{remteoesloc}
If $J(u)= |u|^{p-1} u$, then $\sl{l(r)}= {\mathcal{O}}(r^{p-1})$ when $r\rightarrow+\infty$. Thus condition \eqref{crescita l} is satisfied for $b=\frac{\alpha}{p-1}-\frac{\alpha\lambda_A}{2p}$ with $p>1+\frac{2}{\lambda_A}$.
\end{remark}

We now prove the following Proposition, which will play a crucial role in the global existence result of the following subsection. 

\begin{proposition}\label{theoregloc}
Let the assumptions of Theorem \ref{theoesloc} hold. Let also condition \eqref{crescita l} hold. Then, the solution $u(t)$ can be continuously extended in $L^{2p}(\Omega)$ to a maximal interval $(0,\overline T_{u_0})$  as a solution of \eqref{rappint1}, until $\|u(t)\|_{L^{2p}(\Omega)}<\infty$.
\end{proposition}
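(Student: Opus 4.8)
The plan is to establish Proposition~\ref{theoregloc} via a standard continuation argument, building directly on the local existence result of Theorem~\ref{theoesloc}. First I would apply Theorem~\ref{theoesloc} to obtain a local mild solution $u$ on some interval $[0,\overline{T}]$, with the key bound $\|t^b u(t)\|_{L^{2p}(\Omega)}<2\kappa$. The idea is then to iterate: given a solution on a maximal interval of existence $(0,T_\phi)$, I would assume for contradiction that $T_\phi<\infty$ but $\limsup_{t\to T_\phi^-}\|u(t)\|_{L^{2p}(\Omega)}<\infty$, and show the solution can be extended past $T_\phi$, contradicting maximality. This yields the stated blow-up alternative: either $T_\phi=\infty$, or the $L^{2p}$-norm blows up as $t\to T_\phi^-$.

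The key technical steps proceed as follows. To define $T_\phi$, I would consider the set of all $T'>0$ such that a mild solution satisfying \eqref{rappint1} exists on $[0,T']$ and is unique there; the uniqueness part of Theorem~\ref{theoesloc} guarantees these solutions are consistent on overlaps, so they glue into a single maximal solution on $(0,T_\phi)$ with $T_\phi:=\sup T'$. For the extension step, suppose $T_\phi<\infty$ and $\sup_{t<T_\phi}\|u(t)\|_{L^{2p}(\Omega)}=:R<\infty$. I would fix a time $t_0<T_\phi$ close to $T_\phi$ and restart the problem from $t_0$ with initial datum $u(t_0)\in L^{2p}(\Omega)\subset L^2(\Omega)$, reformulating \eqref{rappint1} as a Cauchy problem based at $t_0$ using the evolution-family structure of $S_\alpha$ and $P_\alpha$. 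Since $\|u(t_0)\|_{L^{2p}(\Omega)}\leq R$, the local existence machinery applied from $t_0$ produces a solution on $[t_0,t_0+\delta]$ for a step size $\delta$ depending only on $R$ (and the fixed data $\alpha,\lambda_A,p,l(\cdot)$), not on $t_0$ itself. Choosing $t_0$ so that $T_\phi-t_0<\delta$ then extends the solution strictly beyond $T_\phi$, the desired contradiction.

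The main obstacle I anticipate is the \emph{uniformity of the extension step}, i.e.\ showing that the existence time $\delta$ from the restart can be taken independent of the base point $t_0$. The subtlety is that the two-parameter operators $S_\alpha(t,\tau)$ and $P_\alpha(t,\tau)$ genuinely depend on both arguments through the non-autonomous operators $A(t)$, so the smallness condition \eqref{cnidato} must be verified uniformly in $t_0$. Here the crucial input is the ultracontractivity bounds of Theorem~\ref{ultracontr}, specifically \eqref{stima ultracontrS}, which gives $\|S_\alpha(t,t_0)\|_{\Lm(L^2(\Omega),L^{2p}(\Omega))}\leq C_S(t-t_0)^{-\alpha\lambda_A(\frac12-\frac1{2p})}$ with a constant $C_S$ \emph{independent of $t$ and $\tau$}; this uniformity is precisely what is needed to bound $\limsup_{t\to t_0^+}\|(t-t_0)^b S_\alpha(t,t_0)u(t_0)\|_{L^{2p}(\Omega)}$ uniformly in $t_0$ once $\|u(t_0)\|_{L^{2p}(\Omega)}\leq R$. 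A second, more routine point is to confirm that the restarted mild formulation is genuinely equivalent to the original one on the overlap, which follows from the composition properties of the families $S_\alpha$ and $P_\alpha$ together with \eqref{soluzionemild}. Once these uniform estimates are in place, the continuation argument closes in the standard way, as in \cite[Theorem~2 and Corollary~2.1]{weiss1}.
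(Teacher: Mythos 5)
The paper does not actually prove this proposition: it simply refers to \cite[Theorem 2 and Corollary 2.1]{weiss1}, so there is no internal argument to compare line by line. Your proposal supplies the missing continuation argument, and its skeleton (blow-up alternative, uniform step size via the $t$,$\tau$-independence of the ultracontractivity constants) is the right one. However, the central step as you have written it contains a genuine gap.

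The gap is the restart at $t_0$. You propose to ``restart the problem from $t_0$ with initial datum $u(t_0)$, reformulating \eqref{rappint1} as a Cauchy problem based at $t_0$ using the evolution-family structure of $S_\alpha$ and $P_\alpha$,'' and you describe the equivalence of the restarted and original formulations as ``routine,'' following from ``composition properties'' of these families. For $\alpha\in(0,1)$ no such composition property holds: the solution operators of a time-fractional problem do not satisfy $S_\alpha(t,s)=S_\alpha(t,t_0)S_\alpha(t_0,s)$, not even in the autonomous case, because the Caputo derivative $\Dta$ is nonlocal in time and the equation carries the full memory of $u$ on $[0,t_0]$. Consequently the restriction of $u$ to $[t_0,T]$ is \emph{not} the mild solution of the problem with initial time $t_0$ and datum $u(t_0)$, and the restarted fixed-point problem you solve on $[t_0,t_0+\delta]$ would produce a different function, so no contradiction with maximality is obtained. (The paper itself stresses this obstruction when explaining why the techniques of \cite{LVnonaut,CLADE} fail here: ``the problem is nonlocal in time.'') The repair is standard but changes the argument: instead of restarting, one extends the solution by running the contraction directly on the full integral equation \eqref{rappint1} on $[0,T_1+\delta]$, in the class of functions that coincide with $u$ on $[0,T_1]$; the history term $\int_0^{T_1}P_\alpha(t,\tau)J(u(\tau))\,\de\tau$ is then a fixed, known quantity, and only the tail $\int_{T_1}^{t}P_\alpha(t,\tau)J(\cdot)\,\de\tau$ must be contracted, which is where the uniform bound $\|u(t)\|_{L^{2p}(\Omega)}\leq R$ and the $(t,\tau)$-uniform ultracontractivity estimates enter to give a step size $\delta=\delta(R)$ independent of $T_1$. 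With that modification your argument closes; as written, the key step fails.
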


\begin{proof}
We follow \cite[Theorem 2 and Corollary 2.1-b)]{weiss1}. From the proof of Theorem \ref{theoesloc}, it turns out that the existence time $\overline T_{u_0}<\infty$ for the solution to \eqref{rappint1} is as long as $\|t^b S_\alpha(t,0)u_0\|_{L^{2p}(\Omega)}\leq\kappa$. Moreover, from Theorem \ref{theoesloc} it holds that the map $t\mapsto u(t)$ is continuous both in $L^2(\Omega)$ and $L^{2p}(\Omega)$ for every $t\in(0,\overline T_{u_0}]$. Hence, the maximal existence time $\overline T_{u_0}$ is the same in $L^2(\Omega)$ and $L^{2p}(\Omega)$ and $\|u(t)\|_{L^{2p}(\Omega)}\to\infty$ as $t\to \overline T_{u_0}$. 
\end{proof}


\subsection{Global existence}

We now give a sufficient condition on the initial datum for obtaining a global solution, by adapting Theorem 3 (b) in \cite{weiss2}.

\begin{theorem}\label{theoesglob}
Let condition \eqref{crescita l} hold. Let $q:=\frac{2\alpha\lambda_A p}{\alpha\lambda_A+2pb}$, $u_0\in L^q(\Omega)$ and $\|u_0\|_{L^q(\Omega)}$ be sufficiently small. Then there exists a function $u\in C([0,\infty); L^q(\Omega))$ which is a global solution of \eqref{rappint1}.
\end{theorem}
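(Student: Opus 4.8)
The plan is to prove global existence by a fixed point argument carried out on the whole half-line $[0,\infty)$, in the scaling-critical space attached to the exponent $q$. The crucial observation is that $q$ is chosen exactly so that
\[
\frac{1}{q}-\frac{1}{2p}=\frac{b}{\alpha\lambda_A},
\]
whence, by the ultracontractivity estimate \eqref{stima ultracontrS} for $S_\alpha$ mapping $L^q(\Omega)$ into $L^{2p}(\Omega)$ (the relevant exponent condition of Theorem \ref{ultracontr} holding in the range of interest), one gets $\|S_\alpha(t,0)u_0\|_{L^{2p}(\Omega)}\le C\,t^{-b}\|u_0\|_{L^q(\Omega)}$, i.e. $t^b\|S_\alpha(t,0)u_0\|_{L^{2p}(\Omega)}$ is bounded uniformly in $t>0$ by $C\|u_0\|_{L^q(\Omega)}$. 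This is precisely the weight $t^b$ appearing in Theorem \ref{theoesloc}. I would therefore work in the complete metric space
\[
E=\Big\{u\in C([0,\infty);L^q(\Omega))\cap C((0,\infty);L^{2p}(\Omega)):\ \sup_{t>0}\|u(t)\|_{L^q(\Omega)}+\sup_{t>0}t^b\|u(t)\|_{L^{2p}(\Omega)}\le R\Big\},
\]
with $R>0$ small, endowed with the natural weighted metric, and consider $\mathcal F u(t)=S_\alpha(t,0)u_0+\int_0^t P_\alpha(t,\tau)J(u(\tau))\,\de\tau$ as in \eqref{rappint1}.

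Second, I would verify that $\mathcal F$ maps $E$ into itself for $R$ and $\|u_0\|_{L^q(\Omega)}$ small. The linear part is controlled as above. For the Duhamel term I would use \eqref{stima ultracontrP}, which gives $\|P_\alpha(t,\tau)\|_{\Lm(L^2(\Omega),L^{2p}(\Omega))}\le C(t-\tau)^{-a}$ (here the exponent is $\alpha-1-\frac{\alpha\lambda_A}{2}(1-\frac{1}{p})=-a$ by the definition of $a$ in \eqref{crescita l}), together with the growth bound $\|J(u(\tau))\|_{L^2(\Omega)}\le C\|u(\tau)\|_{L^{2p}(\Omega)}^\rho$ following from \eqref{LIPJ} and \eqref{crescita l}, where $\rho=\frac{1-a+b}{b}>1$. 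Inserting $\|u(\tau)\|_{L^{2p}(\Omega)}\le R\tau^{-b}$ and using \eqref{propgamma1} in the form
\[
\int_0^t(t-\tau)^{-a}\tau^{-(1-a+b)}\,\de\tau=\frac{\Gamma(1-a)\Gamma(a-b)}{\Gamma(1-b)}\,t^{-b},
\]
which converges precisely because $0<b<a<1$, yields $t^b\big\|\int_0^t P_\alpha(t,\tau)J(u(\tau))\,\de\tau\big\|_{L^{2p}(\Omega)}\le C R^\rho$. An analogous computation with $P_\alpha$ mapping $L^2(\Omega)$ into $L^q(\Omega)$ (exponent $b-a\in(-1,0)$) bounds the $L^q(\Omega)$ norm uniformly in $t$. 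Since $\rho>1$, choosing $R$ so that $CR^\rho\le R/2$ and then $\|u_0\|_{L^q(\Omega)}$ so small that $C\|u_0\|_{L^q(\Omega)}\le R/2$ gives $\mathcal F\colon E\to E$.

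Third, the contraction estimate follows the same pattern: by the local Lipschitz bound \eqref{LIPJ}, $\|J(u(\tau))-J(v(\tau))\|_{L^2(\Omega)}\le l(r)\|u(\tau)-v(\tau)\|_{L^{2p}(\Omega)}$ with $l(r)\le C(R\tau^{-b})^{(1-a)/b}=CR^{(1-a)/b}\tau^{-(1-a)}$, and the same Beta-type integral produces a factor $t^{-b}$, so that the weighted distance contracts with constant $CR^{(1-a)/b}$, which is $<1$ for $R$ small. The Banach fixed point theorem then yields a unique $u\in E$ solving \eqref{rappint1} on all of $[0,\infty)$; in particular, by Proposition \ref{theoregloc} the local solution never leaves $L^{2p}(\Omega)$ and is thus globally extended.

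The step I expect to be the main obstacle is the continuity of $u$ into the \emph{critical} space $L^q(\Omega)$ up to $t=0$, since the $L^q(\Omega)$ norm of the Duhamel term is only bounded, not a priori vanishing, as $t\to0^+$ (the corresponding integral is scale invariant). I would resolve this by a density argument: setting $\eta(t):=\sup_{0<\tau\le t}\tau^b\|u(\tau)\|_{L^{2p}(\Omega)}$, the above estimate gives $\big\|\int_0^t P_\alpha(t,\tau)J(u(\tau))\,\de\tau\big\|_{L^q(\Omega)}\le C\,\eta(t)^\rho$, so it suffices to show $\eta(t)\to0$ as $t\to0^+$, which in turn reduces to $\lim_{t\to0^+}t^b\|S_\alpha(t,0)u_0\|_{L^{2p}(\Omega)}=0$. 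This limit holds for $u_0\in L^{2p}(\Omega)$, where $t^b\|S_\alpha(t,0)u_0\|_{L^{2p}(\Omega)}\le Ct^b\|u_0\|_{L^{2p}(\Omega)}\to0$, and extends to every $u_0\in L^q(\Omega)$ because $L^{2p}(\Omega)$ is dense in $L^q(\Omega)$ (here $q<2p$ and $\sigma$ is finite) while $t^b\|S_\alpha(t,0)\,\cdot\,\|_{L^{2p}(\Omega)}$ is uniformly bounded on $L^q(\Omega)$. Combined with the strong continuity $S_\alpha(t,0)u_0\to u_0$ in $L^q(\Omega)$, this gives $u\in C([0,\infty);L^q(\Omega))$ with $u(0)=u_0$, completing the proof.
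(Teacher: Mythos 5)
Your argument is correct, and the two pivotal computations --- the identity $\alpha\lambda_A\left(\frac{1}{q}-\frac{1}{2p}\right)=b$, which forces the $t^{-b}$ decay of $S_\alpha(t,0)u_0$ in $L^{2p}(\Omega)$, and the convergent Beta-type integral $\int_0^1(1-\tau)^{-a}\tau^{a-1-b}\,\de\tau$ --- are exactly the ones the paper relies on. The route, however, is genuinely different. The paper does not run a fixed point on $[0,\infty)$: it notes that $\|t^b S_\alpha(t,0)u_0\|_{L^{2p}(\Omega)}\le C_S\|u_0\|_{L^q(\Omega)}$ verifies the smallness condition \eqref{cnidato}, invokes Theorem \ref{theoesloc} to obtain the local solution with $\|t^b u(t)\|_{L^{2p}(\Omega)}\le 2C_S\|u_0\|_{L^q(\Omega)}$, and then reduces globality to the continuation criterion of Proposition \ref{theoregloc}: it suffices that $\|u(t)\|_{L^{2p}(\Omega)}$ stay finite. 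This is achieved not by a contraction but by an a priori estimate on the scalar function $f(T)=\sup_{t\in[0,T]}\|t^b u(t)\|_{L^{2p}(\Omega)}$, which satisfies $f(T)\le C_S\|u_0\|_{L^q(\Omega)}+\left(2C_S\|u_0\|_{L^q(\Omega)}\right)^{\frac{1-a}{b}}\Lambda B C_P f(T)$ and, being continuous and nondecreasing with $f(0)=0$, can never reach the level $2\epsilon$ once $\|u_0\|_{L^q(\Omega)}$ is small --- a continuity (bootstrap) argument. Your direct global contraction buys a self-contained construction, uniqueness in the ball $E$, and, in your final paragraph, an actual proof of continuity into $L^q(\Omega)$ up to $t=0$ by density, a point the paper's statement claims but whose proof it does not detail; the paper's route buys economy, since the self-map and contraction properties are only ever checked on a short interval and the global information is carried by a single integral inequality. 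One caveat you share with the paper: the superlinear bound $l(r)\le\Lambda r^{\frac{1-a}{b}}$ extracted from \eqref{crescita l} is applied for every value of $r=\|u(\tau)\|_{L^{2p}(\Omega)}$, although the growth hypothesis only guarantees it for $r$ large; this is harmless (enlarge $\Lambda$ using the local Lipschitz bound \eqref{LIPJ} on the unit ball) but deserves a word in either write-up.
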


\begin{proof}
Since $q<2p$, from the ultracontractivity of $S_\alpha(t,\tau)$ and $P_\alpha(t,\tau)$ it follows that 
\begin{equation}\label{stima Sa 2p-q}
\|S_\alpha(t,\tau)\|_{\Lm(L^q(\Omega),L^{2p}(\Omega))}\leq C_S (t-\tau)^{-\alpha\lambda_A\left(\frac{1}{q}-\frac{1}{2p}\right)}= C_S (t-\tau)^{-b},
\end{equation}
and
$$\|(t-\tau)^{1-\alpha}P_\alpha(t,\tau)\|_{\Lm(L^q(\Omega),L^{2p}(\Omega))}\leq C_P (t-\tau)^{-\alpha\lambda_A\left(\frac{1}{q}-\frac{1}{2p}\right)}=C_P (t-\tau)^{-b}.$$
From \eqref{stima Sa 2p-q}, we have that
$$ \|t^b S_\alpha(t,0)u_0\|_{L^{2p}(\Omega)}\leq C_S \|u_0\|_{L^q(\Omega)},$$ hence, by choosing $\|u_0\|_{L^q(\Omega)}$ sufficiently small, from Theorem \ref{theoesloc} we have that there exists a local solution of \eqref{rappint1} $u \in C([0,\overline T];L^q(\Omega))$. Furthermore, from Theorem \ref{theoesloc} we also have that $u \in C((0,\overline T];L^{2p}(\Omega))$ and $\|t^b u(t)\|_{L^{2p}(\Omega)}\leq 2C_S\|u_0\|_{L^q(\Omega)}$.

From Proposition \ref{theoregloc}, if we prove that $\|u(t)\|_{L^{2p}(\Omega)}$ is bounded for every $t>0$, then $u(t)$ is a global solution. We will prove that $\|t^b u(t)\|_{L^{2p}(\Omega)}$ is bounded for every $t>0$, and we will use the notations of the proof of Theorem \ref{theoesloc}.

As before, we choose $\Lambda>0$ such that $l(r)\leq\Lambda r^{\frac{1-a}{b}}$ for $r\geq 1$. Then 
\begin{equation}\notag
\begin{split}
&\|t^b u(t)\|_{L^{2p}(\Omega)}\leq C_S\|u_0\|_{L^q(\Omega)}+
 t^b \int_0^t \|P_\alpha(t,\tau)\|_{\Lm(L^2(\Omega),L^{2p}(\Omega))}\|J(u(\tau))\|_{L^2(\Omega)}\,\de\tau\\[2mm]
&\leq C_S\|u_0\|_{L^q(\Omega,m)}+ C_P\Lambda\left(2C_S \|u_0\|_{L^q(\Omega)}\right)^{\frac{1-a}{b}} t^b \int_0^t (t-\tau)^{-a} \tau^{a-1-b}\|\tau^b u(\tau)\|_{L^{2p}(\Omega)}\,\de\tau\\[2mm]
&\leq C_S\|u_0\|_{L^q(\Omega)}+C_P\Lambda\left(2C_S\|u_0\|_{L^q(\Omega)}\right)^{\frac{1-a}{b}}\sup_{t\in [0,T]} \|t^b u(t)\|_{L^{2p}(\Omega)} \int_0^1 (1-\tau)^{-a} \tau^{a-1-b}\,\de\tau,
\end{split}
\end{equation}
where in the last inequality we made the same change of variables in the integral that we did in the proof of Theorem \ref{theoesloc}.

Let now $f(T)=\sup_{t\in [0,T]} \|t^b u(t)\|_{L^{2p}(\Omega)}$. Then $f(T)$ is a continuous nondecreasing function with $f(0)=0$ which satisfies
$$f(T)\leq C_S\|u_0\|_{L^q(\Omega)}+\left(2C_S\|u_0\|_{L^q(\Omega)}\right)^{\frac{1-a}{b}} \Lambda BC_P f(T),$$
where $B:=\int_0^1 (1-\tau)^{-a} \tau^{a-1-b}\,\de\tau>0$. If $C_S\|u_0\|_{L^q(\Omega)}\leq \epsilon$ and $2^{\frac{1-a+b}{b}}\Lambda BC_P\epsilon^{\frac{1-a}{b}}<1$, then $f(T)$ can never be equal to $2\epsilon$. If it could, we would have $2\epsilon\leq\epsilon+(2\epsilon)^{\frac{1-a+b}{b}}\Lambda BC_P$, i.e., $\epsilon\leq(2\epsilon)^{\frac{1-a+b}{b}} \Lambda BC_P$, which is false if $\epsilon>0$ is small enough.

This proves that, if $\|u_0\|_{L^q(\Omega)}$ is sufficiently small, then $\|t^b u(t)\|_{L^{2p}(\Omega)}$ remains bounded, hence the assertion follows.
\end{proof}

\section{Application: Fractional heat equation for an operator in non-divergence form}\label{sec5}
\setcounter{equation}{0}

In this section we present an application to Boundary Value Problems. From now on, let $\Omega\subset\R^N$ a bounded domain with smooth boundary. For $x=(x_1,\dots,x_N)\in\Omega$, we adopt the following notation:
\begin{equation}\notag
\partial_i:=\frac{\partial}{\partial x_i},\quad\partial_{i,j}:=\frac{\partial^2}{\partial x_i\partial x_j},\quad i,j=1,\dots,N.
\end{equation}


The problem we consider in this section is formally stated as follows: 
\begin{equation}\notag
(\tilde P_H)\begin{cases}
\Dta u(t,x)-B(t,x,\nabla)u(t,x)=|u|^{p-1}u &\text{in $(0,T]\times\Omega$,}\\[2mm]
u(x,t)=0\quad &\text{on $(0,T]\times\partial\Omega$},\\[2mm]
u(0,x)=u_0(x) &\text{in $\overline\Omega$},
\end{cases}
\end{equation}
where $B$ is defined as
\begin{equation}\label{def B}
B(t,x,\nabla):=\sum_{i,j=1}^N a_{i,j}(t,x)\partial_{ij}+\sum_{i=1}^N b_{i}(t,x)\partial_{i}+c(t,x)I.
\end{equation}
We assume that the matrix $[a_{i,j}]$ is symmetric and that $B(x,t,\nabla)$ is uniformly elliptic in $\Omega$, i.e. there exists a constant $C>0$ such that
\begin{equation}\label{hp B1}
\sum_{i,j=1}^N a_{i,j}(t,x)\xi_i\xi_j\geq C|\xi|^2
\end{equation}
for every $x\in\overline\Omega$, $t\in[0,T]$ and $\xi\in\R^N$.

We take the functions $a_{i,j}(t,x)$, $b_i(t,x)$ and $c(t,x)$ to be real-valued and continuous in $\overline\Omega$ (for every fixed $t\in[0,T]$). We also suppose that the coefficients $a_{i,j}(t,x)$, $b_i(t,x)$ and $c(t,x)$ are H\"older continuous in $t$, i.e. there exist a constant $C_H>0$ and $0<\theta\leq 1$ such that
\begin{equation}\label{hp B2}
\begin{split}
&|a_{i,j}(t,x)-a_{i,j}(s,x)|\leq C_H|t-s|^\theta\quad\text{for every }x\in\overline\Omega,0\leq s,t\leq T\text{ and }i,j=1,\dots,N,\\[2mm]
&|b_{i}(t,x)-b_{i}(s,x)|\leq C_H|t-s|^\theta\quad\text{for every }x\in\overline\Omega,0\leq s,t\leq T\text{ and }i=1,\dots,N,\\[2mm]
&|c(t,x)-c(s,x)|\leq C_H|t-s|^\theta\quad\text{for every }x\in\overline\Omega,0\leq s,t\leq T.
\end{split}
\end{equation}
We take the same constant $C_H$ and the same H\"older exponent $\theta$ for all the coefficients of the operator $B$ for the sake of simplicity.

We remark that, as stated in Remark \ref{remteoesloc}, the semilinear term $J(u)=|u|^{p-1}u$ satisfies the hypotheses of the previous section, by suitably choosing the exponent $p$.

Following \cite[Part 2, Section 9]{friedman} (see also \cite[Section 7.6]{pazy}), we associate to $B(t,x,\nabla)$ a family of linear operators $A(t)$ in $L^2(\Omega)$, for $t\in[0,T]$, such that $D(A(t))\equiv\X:=H^2(\Omega)\cap H^1_0(\Omega)$ and $B(t,x,\nabla)u(t,x)=A(t)u(t,x)$ for $u\in\X$.

From \cite[Lemma 6.1, Chapter 7]{pazy}, it follows that under hypotheses \eqref{hp B1} and \eqref{hp B2} the operator $A(t)$ associated with $B(t,x,\nabla)$ satisfies the Acquistapace-Terreni conditions \eqref{AT1}-\eqref{AT3}. Hence, if we consider the following abstract version of problem $(\tilde P_H)$ in $L^2(\Omega)$, i.e.
\begin{equation}\notag
(P_H)\begin{cases}
\Dta u(t)-A(t)u(t)= |u(t)|^{p-1}u(t) &\text{for every $t\in(0,T)$,}\\[2mm]
u(0)=u_0,
\end{cases}
\end{equation}
where $u_0$ satisfies the hypotheses of Theorem \ref{theoesloc}, then from Theorem \ref{theoesloc} we deduce that problem $(P_H)$ admits a unique (local) mild solution. Naturally, if $u_0$ satisfies the hypotheses of Theorem \ref{theoesglob}, then problem $(P_H)$ admits a unique mild solution which is global in time.


Moreover, $-A(t)$ is the generator of a semigroup $T_t(\tau)$ which is analytic, contractive and strongly continuous for every $t\in[0,T]$ (see e.g. \cite[Chapter XVII]{DL5}). In addition, $T_t(\tau)$ is ultracontractive (see \cite[Section 6]{ouhabaz}) and if $N>2$ it satisfies the following estimate: for $0<\tau<t<T$ and $1\leq p\leq q\leq\infty$
\begin{equation}\notag
\|T_t(\tau)\|_{\Lm(L^p(\Omega),L^q(\Omega))}\leq C\tau^{-\frac{N}{2}\left(\frac{1}{p}-\frac{1}{q}\right)},
\end{equation}
i.e., in the notations of Theorem \ref{ultracontr}, $\lambda_A=\frac{N}{2}$, while, if $N\leq 2$, the semigroup $T_t(\tau)$ is ultracontractive of exponent $\lambda_A>1$ arbitrary (see \cite{GWbook}).

Hence, for every $0<\tau<t<T$, if $\lambda_A\left(\frac{1}{p}-\frac{1}{q}\right)<1$, both $S_\alpha(t,\tau)$ and $P_\alpha(t,\tau)$ are ultracontractive in the sense of Theorem \ref{ultracontr}.

\vspace{1cm}

\noindent {\bf Acknowledgements.} The authors have been supported by the Gruppo Nazionale per l'Analisi Matematica, la Probabilit\`a e le loro
Applicazioni (GNAMPA) of the Istituto Nazionale di Alta Matematica (INdAM). The authors report there are no competing interests to declare. They also thank MUR for the support under the project PRIN 2022 -- 2022XZSAFN: \lq\lq Anomalous Phenomena on Regular and Irregular Domains: Approximating Complexity for the Applied Sciences" -- CUP B53D23009540006 (see the website \href{https://www.sbai.uniroma1.it/~mirko.dovidio/prinSite/index.html}{https://www.sbai.uniroma1.it/~mirko.dovidio/prinSite/index.html}).


\begin{thebibliography}{100}
\label{Bibliography}
\addcontentsline{toc}{chapter}{\textbf{References}}
\footnotesize{

\bibitem{AT} P. Acquistapace, B. Terreni, On the abstract nonautonomous parabolic Cauchy problem in the case of constant domains,
\emph{Ann. Mat. Pura Appl. (4)} 140 (1985), 1--55.

\bibitem{Baz2000} E.G. Bazhlekova, Subordination principle for fractional evolution equations, \emph{Fract. Calc. Appl. Anal.} 3 (2000), 213--230.

\bibitem{bazthesis} E.G. Bazhlekova, \emph{Fractional Evolution Equations in Banach Spaces}, PhD Thesis, Technische Universiteit Eindhoven, Eindhoven, 2001.

\bibitem{CAPUTO} M. Caputo, Linear models of dissipation whose $Q$ is almost frequency independent. II, \emph{Fract. Calc. Appl. Anal.} 11 (2008), 4--14. Reprinted from \emph{Geophys. J. R. Astr. Soc.} 13 (1967), 529--539.

\bibitem{DCCLy} D. del Castillo-Negrete, B.A. Carreras, V.E. Lynch, Nondiffusive transport in plasma turbulence: a fractional diffusion approach, \emph{Phys. Rev. Lett.} 94 (2005), 065003.

\bibitem{clement} P. Clement, G. Gripenberg, S.O. Londen, \emph{Regularity Properties of Solutions of Fractional Evolution Equations}, Lecture Notes in Pure and Appl. Math., vol. 215, Dekker, New York, 2001.

\bibitem{CLADE} S. Creo, M.R. Lancia, Dynamic boundary conditions for time dependent fractional operators on extension domains, \emph{Adv. Differential Equations} 29 (2024), 727--756.

\bibitem{Daners} D. Daners, Heat kernel estimates for operators with boundary conditions, \emph{Math. Nachr.} 217 (2000), 13--41.

\bibitem{DL5} R. Dautray, J.L. Lions, \emph{Mathematical Analysis and Numerical Methods for Science and Technology Vol. 5: Evolution Problems I}, Springer-Verlag, Berlin, 1988.

\bibitem{Dieth} K. Diethelm, \emph{The Analysis of Fractional Differential Equations. An Application-Oriented Exposition Using Differential Operators of Caputo Type}, Springer-Verlag, Berlin, 2010.

\bibitem{Dong-Kim} H. Dong, D. Kim, $L^p$-estimates for time fractional parabolic equations with coefficients measurable in time, \emph{Adv. Math.} 345 (2019), 289--345.

\bibitem{El-Borai} M.M. El-Borai, The fundamental solutions for fractional evolution equations of parabolic type,  \emph{Bol. Asoc. Mat. Venez.} 11 (2004), 29--43.

\bibitem{friedman} A. Friedman, \emph{Partial Differential Equations}, Holt, Rinehart and Winston, Inc., New York-Montreal-London, 1969.

\bibitem{GWbook} C.G. Gal, M. Warma, \emph{Fractional-in-time Semilinear Parabolic Equations and Applications}, Springer-Cham, Berlin, 2020.

\bibitem{GKMRbook} R. Gorenflo, A.A. Kilbas, F. Mainardi, S.V. Rogosin, \emph{Mittag-Leffler Functions, Related Topics and Applications},
Springer-Verlag, Berlin-Heidelberg, 2014.

\bibitem{GLM} E. Gorenflo, Y. Luchko, F. Mainardi, Analytical properties and applications of the Wright function, \emph{Fract. Calc. Appl. Anal.} 2 (1999), 383--414.

\bibitem{DG} D. Guidetti, On maximal regularity for abstract parabolic problems with fractional time derivative, \emph{Mediterr. J. Math.} 16(2019), 40.

\bibitem{DGJMAA} D. Guidetti, On maximal regularity for the Cauchy-Dirichlet parabolic problem with fractional time derivative, \emph{J. Math. Anal. Appl.} 476 (2019), 637--664.

\bibitem{hezhouFCAA} J.W. He, Y. Zhou, H\"older regularity for non-autonomous fractional evolution equations, \emph{Fract. Calc. Appl. Anal.} 25 (2022), 378--407.

\bibitem{hezhouBull} J.W. He, Y. Zhou, Non-autonomous fractional Cauchy problems with almost sectorial operators, \emph{Bull. Sci. Math.} 191 (2024), 103395.

\bibitem{kubyam} A. Kubica, K. Ryszewska, M. Yamamoto, \emph{Time-Fractional Differential Equations: A Theoretical Introduction}, Springer, Singapore, 2020.

\bibitem{La-Ve3} M.R. Lancia P. Vernole, Semilinear evolution transmission problems across fractal layers, \emph{Nonlinear Anal.} 75 (2012), 4222--4240.

\bibitem{LVnonaut} M.R. Lancia, P. Vernole, Nonautonomous semilinear Wentzell problems in fractal domains, \emph{J. Evol. Equ.} 22 (2022), 88.

\bibitem{Main2010} F. Mainardi, \emph{Fractional Calculus and Waves in Linear Viscoelasticity. An Introduction to Mathematical Models}, Imperial College Press, London, 2010.

\bibitem{Metzler2000} R. Metzler, J. Klafter, The random walk's guide to anomalous diffusion: a fractional dynamics approach, \emph{Phys. Rep.} 339 (2000), 1--77.

\bibitem{MonnPruss} S. Monniaux, J. Pr\"uss, A theorem of the Dore-Venni type for noncommuting operators, \emph{Trans. Amer. Math. Soc.} 349 (1997), 4787--4814.

\bibitem{ouhabaz} E.M. Ouhabaz, \emph{Analysis of Heat Equations on Domains}, London Mathematical Society Monographs Series, 31, Princeton University Press, Princeton, NJ, 2005.

\bibitem{pazy} A. Pazy, \emph{Semigroup of Linear Operators and Applications to Partial Differential Equations}, Applied Mathematical Sciences, 44, Springer-Verlag, New York, 1983.

\bibitem{podlubny} I. Podlubny, \emph{Fractional Differential Equations}, Academic Press, Inc., San Diego, 1999.

\bibitem{sob} P.E. Sobolevskii, On equations of parabolic type in Banach space, \emph{Trudy Moscow Mat. Obsc.} 10 (1961), 297--350 (Russian); \emph{Amer. Math. Soc. Transl.} 49 (1965), 1--62 (English translation).

\bibitem{tanabe} H. Tanabe, On the equations of evolution in a Banach space, \emph{Osaka Math. J.} 12 (1960), 363--376.

\bibitem{weiss3} F.B. Weissler, Semilinear evolution equations in Banach spaces, \emph{J. Functional Analysis} 32 (1979), 277--296.

\bibitem{weiss1} F.B. Weissler, Local existence and nonexistence of semilinear parabolic equations in $L^p$, \emph{Indiana Univ. Math. J.} 29 (1980), 79--102.

\bibitem{weiss2} F.B. Weissler, Existence and non-existence of global solutions for a semilinear heat equation, \emph{Israel J. Math.} 38 (1981), 29--40.

\bibitem{wright} E.M. Wright, The generalized Bessel function of order greater than one, \emph{Q. J. Math. Oxford Ser.} 11 (1940), 36--48.

\bibitem{Zacher} R. Zacher, Maximal regularity of type $L^p$ for abstract parabolic Volterra equations, \emph{J. Evol. Equ.} 5 (2005), 79--103.

\bibitem{zhou2014} Y. Zhou, \emph{Basic Theory of Fractional Differential Equations}, World Scientific, Singapore, 2014.







}

\end{thebibliography}
\end{document}